\numberwithin{equation}{section}
\theoremstyle{plain}
\newtheorem*{theorema*}{Theorem A}
\newtheorem*{theoremb*}{Theorem B}
\newtheorem{theorem}[equation]{Theorem}
\newtheorem*{theorem*}{Theorem}
\newtheorem{lemma}[equation]{Lemma}
\newtheorem{corollary}[equation]{Corollary}
\newtheorem{conjecture}[equation]{Conjecture}
\theoremstyle{definition}
\newtheorem{example}[equation]{Example}
\theoremstyle{remark}
\newtheorem{remark}[equation]{Remark}
\DeclarePairedDelimiter\abs{\lvert\hspace{0.1ex}}{\rvert}
\newcommand{\Z}{\mathbb Z}
\newcommand{\Q}{\mathbb Q}
\newcommand{\R}{\mathbb R}
\newcommand{\F}{\mathbb F}
\newcommand{\A}{\mathbb A}
\newcommand{\C}{\mathbb C}
\newcommand{\Gal}{\mathrm{Gal}}
\newcommand{\Frob}{\mathrm{Frob}}
\newcommand{\defi}[1]{\textsf{#1}} 	% for defined terms
\newcommand{\legen}[2]{\left(\frac{#1}{#2}\right)}
\DeclareMathOperator{\As}{Asai}
\DeclareMathOperator{\Asai}{\As}
\DeclareMathOperator{\GO}{GO}
\DeclareMathOperator{\M}{M}
\DeclareMathOperator{\GL}{GL}
\DeclareMathOperator{\Nm}{Nm}
\DeclareMathOperator{\ord}{ord}
\newcommand{\psmod}[1]{~(\textup{\text{mod}}~{#1})}
\newcommand{\id}{\mathrm{id}}
\DeclareMathOperator{\Res}{\mathrm{Res}}
\newcommand{\frakp}{\mathfrak{p}}
\newcommand{\frakM}{\mathfrak{M}}
\newcommand{\frakN}{\mathfrak{N}}
\newcommand{\Qbar}{\Q^{\textup{al}}}
\newcommand{\Qlbar}{\Q^{\textup{al}}_\ell}
\let\eps\varepsilon
\newcommand{\Sym}{\operatorname{Sym}}
\renewcommand{\d}{{\mathrm d}}
\renewcommand{\Im}{\operatorname{Im}}
\newcommand{\balpha}{\boldsymbol\alpha}
\newcommand{\bbeta}{\boldsymbol\beta}
\newcommand{\calq}{(q-1)}
\begin{document}

\title{Special hypergeometric motives and their $L$-functions: Asai recognition}

\author{Lassina Demb\'el\'e}
%\address{Department of Mathematics, King's College London, Strand WC2R~2LS, London, United Kingdom}
\address{Department of Mathematics, Dartmouth College, 6188 Kemeny Hall, Hanover, NH 03755, USA}
\email{lassina.dembele@gmail.com}

\author{Alexei Panchishkin}
\address{Institut Fourier, Universit\'e Grenoble-Alpes, B.P.~74, 38402 St.-Martin d'H\`eres, France}
\email{alexei.pantchichkine@univ-grenoble-alpes.fr}
\urladdr{\url{https://www-fourier.ujf-grenoble.fr/~panchish/}}

\author{John Voight}
\address{Department of Mathematics, Dartmouth College, 6188 Kemeny Hall, Hanover, NH 03755, USA}
\email{jvoight@gmail.com}
\urladdr{\url{http://www.math.dartmouth.edu/~jvoight/}}

\author{Wadim Zudilin}
\address{Department of Mathematics, IMAPP, Radboud University, PO Box 9010, 6500\,GL Nijme\-gen, Netherlands}
\email{w.zudilin@math.ru.nl}
\urladdr{\url{http://www.math.ru.nl/~wzudilin/}}

\begin{abstract}
We recognize certain special hypergeometric motives, related to and inspired by the discoveries of Ramanujan more than a century ago, as arising from Asai $L$-functions of Hilbert modular forms.
\end{abstract}

\date{\today}

\maketitle

%\tableofcontents

\section{Introduction}
\label{sec1}

\subsection*{Motivation}

The generalized hypergeometric functions are a familiar player in arithmetic and algebraic geometry.
They come quite naturally as periods of certain algebraic varieties, and consequently they encode important information about the invariants of these varieties.  Many authors have studied this rich interplay, including Igusa \cite{Igusa}, Dwork \cite{padic}, and Katz \cite{Ka90}.  More recently, authors have considered \emph{hypergeometric motives} (HGMs) defined over $\Q$, including Cohen \cite{Coh15}, Beukers--Cohen--Mellit \cite{BCM15}, and Roberts--Rodriguez-Villegas--Watkins \cite{RVW}.  A hypergeometric motive over $\Q$ arises from a parametric family of varieties with certain periods (conjecturally) satisfying  a hypergeometric differential equation; the construction of this family was made explicit by Beukers--Cohen--Mellit \cite{BCM15} based on work of Katz \cite{Ka90}.  Following the analogy between periods and point counts (Manin's ``unity of mathematics'' \cite{clemens}), counting points on the reduction of these varieties over finite fields is accomplished via \emph{finite field hypergeometric functions}, a notion originating in work of Greene \cite{Gr87} and Katz~\cite{Ka90}.  These finite sums are analogous to truncated hypergeometric series in which Pochhammer symbols are replaced with Gauss sums, and they provide an efficient mechanism for computing the $L$-functions of hypergeometric motives.  (Verifying the precise connection to the hypergeometric differential equation is usually a difficult task, performed only in some particular cases.)

In this paper, we illustrate some features of hypergeometric motives attached to particular arithmetically significant hypergeometric identities for $1/\pi$ and $1/\pi^2$.  To motivate this study, we consider the hypergeometric function
\begin{equation}  \label{eqn:3F20}
{}_3F_2\biggl(\begin{matrix} \frac12, \, \frac14, \, \frac34 \\ 1, \, 1 \end{matrix} \biggm| z \biggr)
=\sum_{n=0}^\infty
\frac{(\frac{1}{2})_n(\frac{1}{4})_n(\frac{3}{4})_n}
{n!^3}\, z^n,
\end{equation}
where we define the Pochhammer symbol (rising factorial) by
\begin{equation}
(\alpha)_n \colonequals \frac{\Gamma(\alpha+n)}{\Gamma(\alpha)}=\begin{cases}
\alpha(\alpha+1)\dotsb(\alpha+n-1), &\text{for $n\ge1$}; \\
1, &\text{for $n=0$}.
\end{cases}
\end{equation}
%where for $n \geq 1$, we define the Pochhammer symbol $(\alpha)_n=\alpha(\alpha+1)\dotsb(\alpha+n-1)$.
Ramanujan \cite[eq.~(36)]{Ra14} more than a century ago proved the delightful identity
\begin{equation} 
\sum_{n=0}^\infty\frac{(\frac12)_n(\frac14)_n(\frac34)_n}{n!^3}(28n+3)\biggl(-\frac1{48}\biggr)^n
=\frac{16}{\pi\sqrt3}
\label{rama1}
\end{equation}
involving a linear combination of the hypergeometric series \eqref{eqn:3F20} and its $z$-derivative (a different, but contiguous hypergeometric function).  Notice the practicality of this series for computing the quantity on the right-hand side of \eqref{rama1}, hence for computing~$1/\pi$ and $\pi$ itself.

The explanation for the identity \eqref{rama1} was already indicated by Ramanujan: the hypergeometric function can be parametrized by modular functions (see \eqref{eqn:can0tau} below), and the value $-1/48$ arises from evaluation at a complex multiplication (CM) point!  Put into the framework above, we observe that the HGM of rank $3$ with parameters $\balpha=\{\frac12,\frac14,\frac34\}$ and $\bbeta=\{1,1,1\}$ corresponds to the Fermat--Dwork pencil of quartic K3 surfaces of generic Picard rank $19$ defined by the equation
\begin{equation} \label{eqn:XpsiK3}
X_{\psi} \colon x_0^4+x_1^4+x_2^4+x_3^4 = 4\psi x_0x_1x_2x_3
\end{equation}
whose transcendental $L$-function is related to the symmetric square $L$-function attached to a classical modular form (see Elkies--Sch\"utt \cite{ES}).  At the specialization $z = \psi^{-4}=-1/48$, the K3 surface is singular, having Picard rank $20$; it arises as the Kummer surface of $E \times \tau(E)$, where $E$ is the elliptic $\Q$-curve LMFDB label \href{http://www.lmfdb.org/EllipticCurve/2.2.12.1/144.1/b/1}{\textsf{144.1-b1}} defined over $\Q(\sqrt{3})$ attached to the CM order of discriminant $-36$, and $\tau(\sqrt{3})=-\sqrt{3}$.   The corresponding classical modular form $f$ with LMFDB label \href{http://www.lmfdb.org/ModularForm/GL2/Q/holomorphic/144/2/c/a/}{\textsf{144.2.c.a}} has CM, and we have the identity
\begin{equation} 
L(T(X),s) = L(f,s,\Sym^2)
\end{equation}
where $T(X)$ denotes the transcendental lattice of $X$ (as a Galois representation).  The rare event of CM explains the origin of the formula \eqref{eqn:3F20}: for more detail, see Example \ref{exm:K3ex} below.

\subsection*{Main result}

With this motivation, we seek in this paper to explain similar hypergeometric Ramanujan-type formulas for $1/\pi^2$ in higher rank.
Drawing a parallel between these examples, our main result is to experimentally identify that the $L$-function of certain specializations of hypergeometric motives (coming from these formulas) have a rare property: they arise from Asai $L$-fun\-ctions of Hilbert modular forms of weight $(2,4)$ over real quadratic fields.  

For example, consider the higher rank analogue
\begin{equation}
\sum_{n=0}^\infty\frac{(\frac12)_n(\frac13)_n(\frac23)_n(\frac14)_n(\frac34)_n}{n!^5}(252n^2+63n+5)\biggl(-\frac1{48}\biggr)^n
\overset?=\frac{48}{\pi^2}
\label{rama2}
\end{equation}
given by Guillera \cite{Gu03}; the question mark above a relation indicates that it has been experimentally observed, but not proven.  Here, we suggest that \eqref{rama2} is `explained' by the existence of a Hilbert modular form $f$ over $\Q(\sqrt{12})$ of weight $(2,4)$ and level $(81)$ in the sense that we experimentally observe that
\begin{equation} \label{eqn:HGMeqn}
L(H(\tfrac12,\tfrac13,\tfrac23,\tfrac14,\tfrac34;1,1,1,1,1\,|\,{-1/48}), s) \overset?= \zeta(s-2)L(f,s+1,\Asai) 
\end{equation}
where notation is explained in section \ref{sec3}.  (By contrast, specializing the hypergeometric $L$-series at other values $t \in \Q$ generically yields a primitive $L$-function of degree $5$.)  Our main result, stated more generally, can be found in Conjecture \ref{mainconj}.

In spite of a visual similarity between Ramanujan's $_3F_2$ formula \eqref{rama1} for $1/\pi$ and Guillera's $_5F_4$ formula \eqref{rama2} for $1/\pi^2$,
the structure of the underlying hypergeometric motives is somewhat different. Motives attached to $_3F_2$ hypergeometric functions are reasonably well understood (see e.g.\ Zudilin \cite[Observation~4]{Zu18}), and we review them briefly in Section~\ref{sec2}.
By contrast, the $_5F_4$ motives associated with similar formulas had not been linked explicitly to modular forms.   In Conjecture \ref{mainconj}, we propose that they are related to Hilbert modular forms, and we experimentally establish several other formulas analogous to \eqref{eqn:HGMeqn}.

More generally, for a hypergeometric family, we expect interesting behavior (such as a formula involving periods) when the motivic Galois group at a specialization is smaller than the motivic Galois group at the generic point.  We hope that experiments in our setting leading to this kind of explanation will lead to further interesting formulas and, perhaps, a proof.  

% However, even such identification of HGMs and algebraic data is successfully executed, one still wonders about what arithmetic it corresponds to.
% Recent achievements allow us to identify the $L$-functions of rank~2 HGMs with those of classical modular forms, both computationally and rigorously.
% At the same time, higher rank HGMs appeal to numerous conjectural correspondences between algebraic varieties and automorphic forms, like the paramodularity conjecture.
% These are really hard to tackle, even computationally \cite{BPPTVY18}.

\subsection*{Organization}

The paper is organized as follows.  After a bit of setup in section \ref{sec2}, we quickly review hypergeometric motives in section \ref{sec3}.  In section \ref{sec5} we discuss Asai lifts of Hilbert modular forms, then in section \ref{sec6} we exhibit the conjectural hypergeometric relations.  We conclude in section \ref{sec7} with some final remarks.

\subsection*{Acknowledgements}

This project commenced during Zudilin's visit in the Fourier Institute in Grenoble in June 2017,
followed by the joint visit of Demb\'el\'e, Panchishkin, and Zudilin to the Max Planck Institute for Mathematics in Bonn in July 2017, followed by collaboration between Voight and Zudilin during the trimester on \emph{Periods in Number Theory, Algebraic Geometry and Physics} at the Hausdorff Research Institute for Mathematics in Bonn in March--April 2018.
We thank the staff of these institutes for providing such excellent conditions for research.  

Demb\'el\'e and Voight were supported by a Simons Collaboration grant (550029, to Voight).

It is our pleasure to thank Frits Beukers, Henri Cohen, Vasily Golyshev, Jes\'us Guillera, G\"unther Harder, Yuri Manin, Anton Mellit, David Roberts, Alexander Varchenko, Fernando Rodriguez-Villegas, Mark Watkins and Don Zagier
for valuable feedback, stimulating discussions, and crucial observations.  The authors would also like to thank the anonymous referee for their stimulating report, insightful comments, and helpful suggestions.

\section{Hypergeometric functions}
\label{sec2}

In this section, we begin with some basic setup.  For $\alpha_1,\dots,\alpha_d \in \Q$ and $\beta_1,\dots,\beta_{d-1} \in \Q_{>0}$, define the \defi{generalized hypergeometric function} 
\begin{equation}
{}_dF_{d-1}\biggl(\begin{matrix}
\alpha_1, \, \alpha_2, \, \dots, \, \alpha_d \\
\beta_1, \, \dots, \, \beta_{d-1} \end{matrix} \biggm|z\biggr)
\colonequals \sum_{n=0}^\infty
\frac{(\alpha_1)_n(\alpha_2)_n\dotsb(\alpha_d)_n}
{(\beta_1)_n\dotsb(\beta_{d-1})_n}\,
\frac{z^n}{n!}\,.
\label{hfun}
\end{equation}
These functions possess numerous features that make them unique in the class of special functions.
It is convenient to abbreviate \eqref{hfun} as
\begin{equation}
{}_dF_{d-1}(\balpha,\bbeta\mid z)
=F(\balpha,\bbeta\mid z),
\label{hfun2}
\end{equation}
where $\balpha=\{\alpha_1,\dots,\alpha_d\}$ and
$\bbeta=\{\beta_1,\dots,\beta_d\}=\{\beta_1,\dots,\beta_{d-1},1\}$
are called the \defi{parameters} of the hypergeometric function: they are multisets (that is, sets with possibly repeating elements), with the additional element $\beta_d=1$ introduced to reflect the appearance of $n!=(1)_n$ in the denominator in~\eqref{hfun}.
The hypergeometric function \eqref{hfun} satisfies a linear homogeneous differential equation of order~$d$:
\begin{equation}
D(\balpha,\bbeta;z): \quad
\biggl(z\prod_{j=1}^d\biggl(z\frac{\d}{\d z}+\alpha_j\biggr)-\prod_{j=1}^d\biggl(z\frac{\d}{\d z}+\beta_j-1\biggr)\biggr)y=0.
\label{hde}
\end{equation}

Among many arithmetic instances of the hypergeometric functions, there are those that can be parameterized by modular functions.
One particular example, referenced in the introduction, is
\begin{equation} \label{eqn:can0tau}
{}_3F_2\biggl(\begin{matrix} \frac12, \, \frac14, \, \frac34 \\ 1, \, 1 \end{matrix} \biggm|\rho(\tau)\biggr)
=\biggl(\frac{\eta(\tau)^{16}}{\eta(2\tau)^8}+64\,\frac{\eta(2\tau)^{16}}{\eta(\tau)^8}\biggr)^{1/2},
\end{equation}
for $\tau \in \C$ with $\Im(\tau)>0$, where 
\begin{equation} 
\rho(\tau) \colonequals \frac{256\eta(\tau)^{24}\eta(2\tau)^{24}}{(\eta(\tau)^{24}+64\eta(2\tau)^{24})^2} 
\end{equation}
and 
$\eta(\tau)=q^{1/24}\prod_{j=1}^\infty(1-q^j)$ denotes the Dedekind eta function with $q=e^{2\pi i\tau}$.
Taking the CM point $\tau=(1+3\sqrt{-1})/2$, we obtain $\rho(\tau)=-1/48$ and the evaluation \cite[Example 3]{GZ13}
\begin{equation} 
{}_3F_2\biggl(\begin{matrix} \frac12, \, \frac14, \, \frac34 \\ 1, \, 1 \end{matrix} \biggm|-\frac1{48}\biggr)
=\frac{\sqrt{2}}{\pi\,3^{5/4}}\biggl(\frac{\Gamma(\frac14)}{\Gamma(\frac34)}\biggr)^2.
\label{CSF}
\end{equation}
As indicated by Ramanujan \cite{Ra14}, CM evaluations of hypergeometric functions like \eqref{CSF} are accompanied
by formulas for $1/\pi$, like \eqref{rama1} given in the introduction. 

\begin{remark}
Less is known about the conjectured congruence counterpart of \eqref{CSF},
\begin{equation}
\sum_{n=0}^{p-1}\frac{(\frac12)_n(\frac14)_n(\frac34)_n}{n!^3}\biggl(-\frac1{48}\biggr)^n
\overset?\equiv b_p\pmod{p^2}
\label{CSF1}
\end{equation}
for primes $p\ge5$, where
\begin{equation}
b_p \colonequals \begin{cases}
2(x^2-y^2) & \text{if}\; p\equiv1\psmod{12}, \; p=x^2+y^2 \;\text{with}\; 3\mid y; \\
-(x^2-y^2) & \text{if}\; p\equiv5\psmod{12}, \; p=\tfrac12(x^2+y^2) \;\text{with}\; 3\mid y; \\
0 & \text{if}\; p\equiv3\psmod{4}.
\end{cases}
\label{ap43}
\end{equation}
The congruence \eqref{CSF1}
%(see also \cite[Conjecture 4.14]{Su18} for its modulo $p^3$ version)
is in line with a general prediction of Roberts--Rodriguez-Villegas \cite{RV17}, though stated there for $z=\pm1$ only.  
\end{remark}

%  As explained in the introduction, a companion to \eqref{CSF} given by Ramanujan \cite[eq.~(36)]{Ra14} reads
% \begin{equation}
% \sum_{n=0}^\infty\frac{(\frac12)_n(\frac14)_n(\frac34)_n}{n!^3}(28n+3)\biggl(-\frac1{48}\biggr)^n
% =\frac{16}{\pi\sqrt3}.
% \label{rama1}
% \end{equation}
% in which a linear combination of the series \eqref{CSF} and its $z$-derivative (which is a different but contiguous hypergeometric function) is involved.

\begin{comment}
Our principal source of investigation here are Ramanujan's formula \eqref{rama1} and its higher dimensional analogue
\begin{equation}
\sum_{n=0}^\infty\frac{(\frac12)_n(\frac13)_n(\frac23)_n(\frac14)_n(\frac34)_n}{n!^5}(252n^2+63n+5)\biggl(-\frac1{48}\biggr)^n
\overset?=\frac{48}{\pi^2}
\label{rama2}
\end{equation}
given by Guillera \cite{Gu03} (observe the divisibility of $252$ by~$28$).
The two possess \cite{Zu09} supercongruence counterparts
\begin{align*}
\sum_{n=0}^{p-1}\frac{(\frac12)_n(\frac14)_n(\frac34)_n}{n!^3}(28n+3)\biggl(-\frac1{48}\biggr)^n
&\overset?\equiv3\biggl(\frac{-3}p\biggr)p\pmod{p^3}
\\ \intertext{and}
\sum_{n=0}^{p-1}\frac{(\frac12)_n(\frac13)_n(\frac23)_n(\frac14)_n(\frac34)_n}{n!^5}(252n^2+63n+5)\biggl(-\frac1{48}\biggr)^n
&\overset?\equiv5p^2\pmod{p^5}
\end{align*}
for primes $p\ge5$, where $\bigl(\frac{\cdot}p\bigr)$ denotes the Legendre(--Jacobi--Kronecker) symbol.
The first of the two congruences is proven modulo $p^2$ in~\cite{Gu18}.
\end{comment}

Ramanujan's and Ramanujan-type formulas for $1/\pi$ corresponding to rational values of $z$ are tabulated in \cite[Tables 3--6]{CC12}.
Known $_5F_4$ identities for $1/\pi^2$ are due to Guillera \cite{Gu02,Gu03,Gu06,Gu11,Gu19}, also in collaboration with Almkvist \cite{AG12} and Zudilin \cite{GZ12}.
We list the corresponding hypergeometric data $\balpha$ and $z$ for them in Table~\ref{tab1}, we have
$\bbeta=\{1,1,1,1,1\}$ in all these cases.

\begin{equation} \label{tab1}\addtocounter{equation}{1} \notag
\begin{gathered}
{\renewcommand{\arraystretch}{1.2}
\begin{tabular}{r|c|c|c}
\# & $\balpha$ & $z$ & reference \\
\hline
\hline
 1 & $\{\tfrac12,\tfrac12,\tfrac12,\tfrac12,\tfrac12\}$ & $-1/2^2 \vphantom{|^{0^0}}$ & \cite[p.~46]{Gu06}, \cite{AG12} \\
 2 & $\{\tfrac12,\tfrac12,\tfrac12,\tfrac12,\tfrac12\}$ & $-2^2$ & \cite[eq.~(2)]{GZ12} \\
 3 & $\{\tfrac12,\tfrac12,\tfrac12,\tfrac12,\tfrac12\}$ & $-1/2^{10}$ & \cite[p.~603]{Gu02}, \cite{AG12} \\
 4 & $\{\tfrac12,\tfrac12,\tfrac12,\tfrac12,\tfrac12\}$ & $-2^{10}$ & \cite[eq.~(8)]{GZ12} \\
 5 & $\{\tfrac12,\tfrac12,\tfrac12,\tfrac13,\tfrac23\}$ & $(3/4)^3$ & \cite[eq.~(17)]{Gu11}, \cite{AG12} \\
 6 & $\{\tfrac12,\tfrac12,\tfrac12,\tfrac13,\tfrac23\}$ & $-3^3$ & \cite[eq.~(36)]{Gu19} \\
 7 & $\{\tfrac12,\tfrac15,\tfrac25,\tfrac35,\tfrac45\}$ & $-5^5/2^8$ & \cite[eq.~(39)]{Gu19} \\ 
 8 & $\{\tfrac12,\tfrac12,\tfrac12,\tfrac14,\tfrac34\}$ & $1/2^4$ & \cite[p.~603]{Gu02}, \cite{AG12} \\
 9 & $\{\tfrac12,\tfrac13,\tfrac23,\tfrac14,\tfrac34\}$ & $-1/48$ & \cite[eq.~(2-3)]{Gu03}, \cite{AG12} \\
10 & $\{\tfrac12,\tfrac13,\tfrac23,\tfrac14,\tfrac34\}$ & $-3^3/2^4$ & \cite[eq.~(9)]{GZ12} \\
11 & $\{\tfrac12,\tfrac13,\tfrac23,\tfrac16,\tfrac56\}$ & $-(3/4)^6$ & \cite[Table 3]{AG12} \\
12 & $\{\tfrac12,\tfrac13,\tfrac23,\tfrac16,\tfrac56\}$ & $(3/5)^6$ & \cite[eq.~(1-1)]{AG12} \\
13 & $\{\tfrac12,\tfrac13,\tfrac23,\tfrac16,\tfrac56\}$ & $-1/80^3$ & \cite[eq.~(2-4)]{Gu03}, \cite{AG12} \\
14 & $\{\tfrac12,\tfrac14,\tfrac34,\tfrac16,\tfrac56\}$ & $-1/2^{10}$ & \cite[eq.~(2-2)]{Gu03}, \cite{AG12} \\
15 & $\{\tfrac12,\tfrac18,\tfrac38,\tfrac58,\tfrac78\}$ & $1/7^4$ & \cite[eq.~(2-5)]{Gu03}, \cite{AG12} \\
16 & $\{\tfrac12,\tfrac12,\tfrac12,\tfrac13,\tfrac23\}$ & $3^3((-1\pm \sqrt5)/2)^{15}$ & \cite[Table 3]{AG12}, \cite[eq.~(38)]{Gu19} 
\end{tabular}} \\
\text{Table \ref{tab1}: Hypergeometric data for Guillera's formulas for $1/\pi^2$}
\end{gathered}
\end{equation}

\begin{remark}
Some other entries in Table~\ref{tab1} nicely pair up with Ramanujan's and Ramanu\-jan-type formulas for $1/\pi$  \cite{Gu03}. Apart from case \#9 from Table~\ref{tab1} discussed above, we highlight another instance \cite[eq.~(2-4)]{Gu03}:
\begin{equation}
\sum_{n=0}^\infty\frac{(\frac12)_n(\frac13)_n(\frac23)_n(\frac16)_n(\frac56)_n}{n!^5}(5418n^2+693n+29)\biggl(-\frac1{80^3}\biggr)^n
\overset?=\frac{128\sqrt5}{\pi^2}
\label{rama2b}
\end{equation}
underlying entry \#13, which shares similarities with the Ramanujan-type formula
\begin{equation}
\sum_{n=0}^\infty\frac{(\frac12)_n(\frac16)_n(\frac56)_n}{n!^3}(5418n+263)\biggl(-\frac1{80^3}\biggr)^n
=\frac{640\sqrt{15}}{3\pi}.
\label{rama1b}
\end{equation}
\end{remark}

\begin{remark} \label{par-structure0}
The specialization points $z$ in Table~\ref{tab1} exhibit significant structure: writing $z=a/c$ and $1-z=b/c$, so that $a+b=c$, we already see $abc$-triples of good quality!  But more structure is apparent: see Remark \ref{par-structure}.  
\end{remark}

\section{Hypergeometric motives}
\label{sec3}

%Good Euler factors for hypergeometric $L$-functions \cite{Ka90} admit fast $p$-adic implementation due to Henri Cohen.

In this section, we quickly introduce the theory of hypergeometric motives over $\Q$.  
% As mentioned in the introduction, a hypergeometric motive \cite{Vi17} is a family of algebraic varieties with a period  are related to solutions of the corresponding differential equation \eqref{hde}. A candidate for the variety is based on counting points over finite fields\,---\,computing the Frobenius operators; it is given in \cite{Ka90,Gr87} and, in a much greater generality, in \cite{BCM15}.

\subsection*{Definition}

Analogous to the generalized hypergeometric function \eqref{hfun}, a hypergeometric motive is specified by \defi{hypergeometric data}, consisting of two multisets
${\boldsymbol\alpha}=\{\alpha_1, \dots ,\alpha_d\}$ and ${\boldsymbol \beta}=\{\beta_1, \dots ,\beta_d\}$ with $\alpha_j,\beta_j \in \Q \cap (0,1]$ satisfying 
$\balpha \cap \bbeta = \emptyset$ and $\beta_d=1$.  
% The premises are exactly the same as in the case of hypergeometric function \eqref{hfun2}
% and the related hypergeometric differential equation~\eqref{hde}, where $z$ is the additional parameter
% with respect to which we differentiate. In what follows, a (somewhat different) parameter $\lambda$ is used to parameterise
% hypergeometric motives in a family.
Herein, we consider only those hypergeometric motives that are \defi{defined over $\mathbb Q$},
which means that the polynomials 
\begin{equation}
\prod_{j=1}^d ( t - e^{2\pi i\alpha_j}) \quad \text{and} \quad \prod_{j=1}^d ( t - e^{2\pi i\beta_j}) 
\end{equation}
have coefficients in $\mathbb Z$---that is, they are products of cyclotomic polynomials.

Let $q$ be a prime power that is coprime to the least common denominator of $\balpha \cup \bbeta$, and let $\mathbb F_q$ be a finite field with $q$ elements.  Let $\omega \colon \F_q^\times \to \C^\times$ be a generator of the character group on $\F_q^\times$, and let $\psi_q \colon \F_q \to \C^\times$ be a nontrivial (additive) character.  For $m \in \Z$, define the \defi{Gauss sum}
\begin{equation}
g(m) \colonequals \sum_{x\in \mathbb F_q^\times}\omega(x)^m\psi_q(x);
\end{equation}
then $g(m)$ is periodic in $m$ with period $q - 1=\#\mathbb F_q^\times$.  

When $\alpha_j(q-1),\beta_j(q-1) \in \Z$ for all $j$, we define the \defi{finite field hypergeometric sum} for $t \in \F_q^\times$ by
\begin{equation}
\label{FHS}
H_q(\balpha,\bbeta\mid t)=\frac{1}{1-q}\sum_{m=0}^{q-2}\omega((-1)^d t)^m
\prod_{j=1}^d\frac{g(m+\alpha_j\calq)\,g(-m-\beta_j\calq)}{g(\alpha_j\calq)\,g(-\beta_j\calq)}
\end{equation}
by direct analogy with the generalized hypergeometric function.  More generally, Beukers--Cohen--Mellit \cite[Theorem 1.3]{BCM15} have extended this definition to include all prime powers $q$ that are coprime to the least common denominator of $\balpha \cup \bbeta$.  

\begin{comment}
Though this definition works under the hypothesis $\alpha_i\calq$, $\beta_j\calq\in  \mathbb Z$ for all $i$ and $j$,
the assumption that the hypergeometric data correspond to a motive over $\mathbb Q$ leads to a recipe to extend
the definition \eqref{FHS} without the constraint, by only imposing that $q$ is relatively prime to~$M$ in~\eqref{eq-M}.
\end{comment}

% We now follow the recipe of Beukers--Cohen--Mellit \cite{BCM}: from hypergeometric data, we may define finite field hypergeometric sums as follows.  
There exist $p_1, \dots , p_r,q_1, \dots , q_s \in \Z_{\geq 1}$ such that
\begin{equation}
\prod_{j=1}^d \frac{ x - e^{2\pi i\alpha_j}}{ x - e^{2\pi i\beta_j}}=\frac{\prod_{j=1}^r (x^{p_j}-1)} {\prod_{j=1}^s (x^{q_j}-1)},
\end{equation}
and we define
\begin{equation}
\label{eq-M}
M \colonequals \frac{p_1^{p_1} \cdots p_r^{p_r}}{q_1^{q_1} \cdots q_s^{q_s}}.
\end{equation}

\begin{comment}
Let $D(x) \colonequals \gcd(\prod_{j=1}^r (x^{p_j} - 1), \prod_{j=1}^s (x^{q_j} - 1))$, let $\epsilon \colonequals (-1)^{\sum_{j=1}^s q_j}$, and let $s(m) \in \Z_{\geq 0}$ be the multiplicity of the root $e^{2\pi i m /(q-1)}$ in $D(x)$.  

We abbreviate
\begin{equation} \label{eqn:gpmqm}
g(\pmb{p}m,-\pmb{q}m) \colonequals g(p_1m) \cdots g(p_rm) g(-q_1m) \cdots g(-q_sm).
\end{equation}
For $t \in \F_q^\times$, we define the \defi{finite field hypergeometric sum} \cite{BCM15}
\begin{equation} 
H_q(\pmb{\alpha}, \pmb{\beta} \,|\, t) \colonequals \frac{(-1)^{r+s}}{1-q} \sum_{m=0}^{q-2} q^{-s(0) + s(m)} g(\pmb{p}m,-\pmb{q}m)\omega(\epsilon M^{-1}t)^m.
\end{equation}
The details of this extension of the finite hypergeometric sums
can be found in \cite[Theorem~1.3]{BCM15} or \cite[Sect.~4.2]{LTYZ17}.
\end{comment}

Computing the local $L$-factors at good primes is completely automated in the \textsc{Magma} \cite{Magma} package of hypergeometric motives.
% The related implementation can be executed on online Magma Calculator \url{http://magma.maths.usyd.edu.au/calc/}.

\subsection*{Motive and $L$-function}

The finite field hypergeometric sums arose in counting points on algebraic varieties over finite fields, and they combine to give motivic $L$-functions following Beukers--Cohen--Mellit \cite{BCM15}, as follows.  For a parameter $\lambda$, let $V_\lambda$ be the pencil of varieties in weighted projective space defined by the equations
\begin{equation} \label{eqn:xiyj}
x_1+x_2+\dots + x_r = y_1+ \dots + y_s, \quad \lambda x_1^{p_1} \cdots x_r^{p_r} =y_1^{q_1} \cdots y_s^{q_s}
\end{equation}
and subject to $x_i, y_j \ne 0$.  The pencil $V_\lambda$ is affine and singular \cite[Section 5]{BCM15}; in fact, it is smooth outside of $\lambda = 1/M$, where it acquires an ordinary double point.

\begin{theorem}
\label{th-BCM}
Suppose that $\gcd(p_1, \dots , p_r,q_1,\dots, q_s)=1$ and $M\lambda \neq 1$.  Then there exists a
suitable completion $\overline{V_\lambda}$ of $V_\lambda$ such that
$$
\#\overline{V_\lambda}(\mathbb F_q)= P_{rs}(q)+(-1)^{r+s-1}q^{\min(r-1,s-1)}H(\balpha,\bbeta\mid M\lambda),
$$
% where $\overline{V_\lambda}(\mathbb F_q)$ is the set of $\mathbb F_q$-rational points
% on $\overline{V_\lambda}$, the number $M$ is given in~\eqref{eq-M} 
and where $P_{rs}(q) \in \Q(q)$ is explicitly given.
% $$
% P_{rs}(q)=\sum_{m=0}^{\min(r-1,s-1)}\binom{r-1}{m}\binom{s-1}{m}
% \frac{q^{r+s-m-2}-q^m}{q-1}.
% $$
\end{theorem}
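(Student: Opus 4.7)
The plan is to follow the character-sum method of Beukers--Cohen--Mellit~\cite{BCM15}. The starting point is to count points on the open locus $V_\lambda \subset (\Gm)^{r+s}$ by detecting each defining equation via character orthogonality on $\F_q$. The linear relation $\sum_i x_i = \sum_j y_j$ is detected by the additive character identity $\mathbf{1}_{a=0} = q^{-1}\sum_{t \in \F_q} \psi_q(ta)$, and the multiplicative relation $\lambda \prod_i x_i^{p_i} = \prod_j y_j^{q_j}$ is detected by averaging $\omega^m$ over $m \in \Z/(q-1)\Z$. After swapping sums, the point count factors as a product of one-variable character sums of the form $\sum_{x \in \F_q^\times} \omega(x)^{p_i m}\psi_q(tx)$ (and analogously for the $y_j$).

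Next, I would evaluate these inner sums. For $t = 0$, only characters with $p_i m \equiv 0 \pmod{q-1}$ survive; collecting these together with the degenerate strata where some $x_i$ or $y_j$ would be forced to vanish produces the polynomial $P_{rs}(q) \in \Q(q)$. For $t \neq 0$, the change of variable $x \mapsto x/t$ converts each inner sum into a Gauss sum $g(p_i m)$ (resp.\ $g(-q_j m)$) times a power of $\omega(t)$; by the hypothesis that $\balpha,\bbeta$ are the parameters of a motive defined over $\Q$, the total exponent of $\omega(t)$ across the product vanishes, so the summation over $t \in \F_q^\times$ collapses to a single Gauss-sum expression. Using the identity $M = \prod p_i^{p_i}/\prod q_j^{q_j}$, the remaining factor of $\omega$ at the parameter is exactly $\omega((-1)^d M\lambda)^m$. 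I would then match the expression against the Beukers--Cohen--Mellit definition of $H_q(\balpha,\bbeta \mid M\lambda)$, the conversion between the $(\boldsymbol p,\boldsymbol q)$-presentation and the $(\balpha,\bbeta)$-presentation being provided by \cite[Theorem~1.3]{BCM15}. The correct power $q^{\min(r-1,s-1)}$ in the main term is tracked by the normalization $1/(1-q)$ in \eqref{FHS} together with the Hasse--Davenport relations needed to rewrite Gauss sums over $\F_{q^{p_i}}$ in terms of Gauss sums over $\F_q$; the asymmetric $\min$ reflects that on a generic fibre one of the two monomial equations expresses a remaining variable in terms of the others, so the virtual dimension depends on which of $r,s$ is smaller.

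The main obstacle is the compactification: the affine variety $V_\lambda$ is singular and non-proper, and the statement only asserts the existence of a suitable completion $\overline{V_\lambda}$. To produce one, I would embed the ambient torus $(\Gm)^{r+s}$ into a toric variety whose fan is adapted to the multiweights $(p_i)$ and $(q_j)$, analogous to a weighted projective compactification, and resolve via a toric resolution of singularities; the coprimality hypothesis $\gcd(p_1,\ldots,p_r,q_1,\ldots,q_s)=1$ ensures that the generic fibre is smooth away from $M\lambda = 1$. Inclusion--exclusion over the torus orbits at the boundary then contributes further polynomial-in-$q$ summands which, absorbed into $P_{rs}(q)$, preserve the clean form of the main hypergeometric term. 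The condition $M\lambda \neq 1$ precisely excludes the ordinary double-point degeneration noted after \eqref{eqn:xiyj}, so the smoothness and cohomological interpretation underlying the formula remain intact.
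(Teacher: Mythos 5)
First, a point of comparison: the paper itself gives no proof of Theorem~\ref{th-BCM}. It is quoted from Beukers--Cohen--Mellit \cite{BCM15} (their Theorem~1.5), and the surrounding text only records the consequences the authors need. So your sketch cannot be measured against an in-paper argument; it can only be measured against the cited source. On the character-sum side your outline is essentially the correct (and the standard) strategy of \cite{BCM15}: detect the linear equation in \eqref{eqn:xiyj} with additive characters, detect the monomial equation with multiplicative characters, reduce the inner sums to Gauss sums $g(p_im)$ and $g(-q_jm)$, use $\sum_i p_i=\sum_j q_j$ to kill the stray powers of $\omega(t)$, and invoke Hasse--Davenport to pass from the $(\boldsymbol{p},\boldsymbol{q})$-presentation to the $(\balpha,\bbeta)$-presentation of $H_q$. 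The degenerate strata (some $t=0$ or some coordinate forced to vanish) contributing the polynomial $P_{rs}(q)$ is also right in spirit.

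The genuine gap is in your treatment of the completion. You propose to compactify in a toric variety and then ``resolve via a toric resolution of singularities,'' asserting that the coprimality hypothesis ensures the generic fibre is smooth away from $M\lambda=1$. Both claims are wrong as stated. The pencil $V_\lambda$ is singular for \emph{every} $\lambda$ (the paper says so explicitly just before the theorem; $M\lambda=1$ only adds an extra ordinary double point), and the paper immediately after the theorem stresses that the completion $\overline{V_\lambda}$ of \cite{BCM15} ``may still be singular, and a nonsingular completion is not currently known in general'' --- the quotient-singularity behaviour is only an expectation. The coprimality condition $\gcd(p_1,\dots,p_r,q_1,\dots,q_s)=1$ is not a smoothness hypothesis; it is needed so that the monomial equation cuts out a geometrically irreducible subvariety of the torus (equivalently, so that the relevant character sum does not split). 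Moreover, if you actually resolved singularities you would change the point count by the fibres of the resolution, and there is no a priori reason these contributions are polynomial in $q$ and independent of $\lambda$, so they cannot simply be ``absorbed into $P_{rs}(q)$.'' The theorem as stated is about a \emph{specific} toric completion on which the boundary strata are counted directly by inclusion--exclusion over torus orbits; no resolution is performed. To close the gap you should drop the resolution step, fix the BCM completion, and carry out the orbit-by-orbit count on its boundary, accepting that the resulting $\overline{V_\lambda}$ is in general singular.
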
 	

The completion provided in Theorem \ref{th-BCM} may still be singular, and a nonsingular completion is not  currently known in general; we expect that $\overline{V_\lambda}$ has only quotient singularities, and hence behaves like a smooth manifold with respect to rational cohomology, by the nature of the toric (partial) desingularization.  In any event, this theorem shows that the sums \eqref{FHS} have an explicit connection to arithmetic geometry and complex analysis.  

We accordingly define hypergeometric $L$-functions, as follows.  Let $S_{\lambda}$ be the set of primes dividing the numerator or denominator in $M$ together with the primes dividing the numerator or denominator of $M\lambda$ or $M\lambda-1$.  A prime $p \not\in S_\lambda$ is called \defi{good} (for $\balpha,\bbeta,\lambda$).  For a good prime $p$, we define the formal series
\begin{equation} 
L_p(H(\balpha,\bbeta\,|\,M\lambda),T) \colonequals \exp\left(-\sum_{r=1}^{\infty} H_{p^r}(\balpha,\bbeta\,|\,M\lambda)\frac{T^r}{r}\right) \in 1 + T\Q[[T]].
\end{equation}

\begin{corollary}
For $p \not\in S_\lambda$ and $\lambda \in \F_p^{\times}$, we have
\[ L_p(H(\balpha,\bbeta\,|\,M\lambda),T) \in \Q[T]. \]
\end{corollary}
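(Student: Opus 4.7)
The plan is to reduce the corollary to the Dwork--Grothendieck rationality theorem for zeta functions by first converting the finite field hypergeometric sums into point counts via Theorem~\ref{th-BCM}, and then to upgrade rationality to polynomiality using the cohomological interpretation of the hypergeometric motive.

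First I would fix a good prime $p \notin S_\lambda$, so that for every $n \geq 1$ the parameter $M\lambda$ remains a unit in $\F_{p^n}$ and Theorem~\ref{th-BCM} applies with $q = p^n$. Writing $\mu \colonequals \min(r-1,s-1)$, that theorem rearranges to
\begin{equation*}
H_{p^n}(\balpha,\bbeta \mid M\lambda) \;=\; (-1)^{r+s-1} p^{-n\mu}\bigl(\#\overline{V_\lambda}(\F_{p^n}) - P_{rs}(p^n)\bigr).
\end{equation*}
Substituting this into the defining logarithmic series of $L_p(H(\balpha,\bbeta\,|\,M\lambda),T)$ splits it into two pieces. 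The first, involving the point counts, assembles into $\log Z(\overline{V_\lambda}/\F_p,\,T/p^{\mu})$ after the change of variable $T \mapsto T/p^\mu$. The second involves only the explicit rational function $P_{rs}(q) \in \Q(q)$; since $P_{rs}$ is a sum of terms of the shape $c\, q^a/(q^b - 1)$, expanding in geometric series and summing over $n$ produces a finite product of factors of the form $(1 - p^{a} T)^{\pm 1}$ with $a \in \Z$.

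Exponentiating then expresses $L_p(H,T)^{(-1)^{r+s}}$ as $Z(\overline{V_\lambda}/\F_p,\,T/p^\mu)$ divided by an explicit rational function in $T$ with coefficients in $\Q$. The Dwork--Grothendieck rationality theorem applied to the (possibly singular) variety $\overline{V_\lambda}$ yields $Z(\overline{V_\lambda}/\F_p,T) \in \Q(T)$, whence $L_p(H,T) \in \Q(T)$ as a first approximation.

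The main obstacle is promoting rationality to polynomiality, i.e.\ showing that all poles of the rational function cancel. The cleanest route is via the cohomological incarnation (due to Katz) of the hypergeometric sheaf: $H_{p^n}(\balpha,\bbeta \mid M\lambda)$ should equal, up to normalization by $p^{n\mu}$, the trace of $\Frob_{p^n}$ acting on a $d$-dimensional subquotient of $H^{\bullet}_{\et}(\overline{V_\lambda}_{\overline{\F}_p}, \Q_\ell)$ cut out by the hypergeometric local system, and the remaining cohomological pieces are precisely what the explicit polynomial $P_{rs}$ was designed to subtract off. Granting this matching, $L_p(H,T)$ is the characteristic polynomial of $\Frob_p$ on this $d$-dimensional piece, hence a polynomial in $\Q[T]$ (in fact in $\Z[T]$, by purity). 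The technical heart — and where I expect the real difficulty to lie — is verifying this cohomological matchup in the presence of the singularities of $\overline{V_\lambda}$; the expectation stated after Theorem~\ref{th-BCM} that $\overline{V_\lambda}$ has only quotient singularities is precisely what makes $\ell$-adic formalism behave well here, and makes the pole cancellation between $Z(\overline{V_\lambda}/\F_p,\,T/p^\mu)$ and the $P_{rs}$-contribution come out exactly as required.
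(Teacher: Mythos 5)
Your first steps are exactly the paper's proof: the authors simply observe that Theorem \ref{th-BCM} expresses $H_{p^n}(\balpha,\bbeta\mid M\lambda)$ in terms of $\#\overline{V_\lambda}(\F_{p^n})$ and $P_{rs}(p^n)$, that the zeta function of $\overline{V_\lambda}$ is rational by Dwork, and that the exponential series attached to the explicit rational function $P_{rs}(q)$ is again rational, whence $L_p(H(\balpha,\bbeta\,|\,M\lambda),T)\in\Q(T)$ --- and that is all their one-sentence proof actually delivers. Where you diverge is in trying to upgrade rationality to membership in $\Q[T]$. You are right that this is the genuine difficulty, but note that the paper does not close it either: in the remark immediately following the corollary the authors say they merely \emph{expect} $L_p$ to lie in $1+T\Z[T]$ with degree $d$, that this should follow from the construction in Theorem \ref{th-BCM} or from work of Katz, that they could not find a published proof, and that they verify it only in the cases they treat. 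Your proposed route --- identifying $H_{p^n}$, up to the normalization by $p^{n\mu}$, with the trace of Frobenius on a $d$-dimensional subquotient of the cohomology of $\overline{V_\lambda}$, with $P_{rs}$ accounting for the complementary pieces --- is precisely the idea they gesture at, but your ``granting this matching'' clause is carrying all the weight: establishing that matchup in the presence of the singularities of $\overline{V_\lambda}$ (only expected, not known, to be quotient singularities) is not done here or in the paper. So your argument, like theirs, rigorously proves the statement only with $\Q[T]$ weakened to $\Q(T)$; the polynomiality claim should be read as established case by case later in the paper rather than by this corollary's proof.
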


\begin{proof}
The zeta function of $\overline{V_\lambda}$ over $\F_p$ is a rational function by work of Dwork; the exponential series for $P_{rs}(q)$ is also rational, so the result follows from Theorem \ref{th-BCM}.
\end{proof}

\begin{remark}
In fact, we expect that $L_p(H(\balpha,\bbeta\,|\,M\lambda),T) \in 1+T\Z[T]$ is a polynomial of degree $d$;
it should follow from the construction in Theorem \ref{th-BCM} or from work of Katz \cite{Ka76}, but we could not find a published proof.
We establish this property in the cases we consider, as a byproduct of our analysis.
\end{remark}

Globalizing, we define the incomplete $L$-series
\begin{equation} 
L_S(H(\balpha,\bbeta\,|\,M\lambda),s) = \prod_{p \not\in S} L_p(H(\balpha,\bbeta\,|\,M\lambda),p^{-s})^{-1}
\end{equation}
a Dirichlet series that converges in a right half-plane, but otherwise remains rather mysterious.  Our goal in what follows will be to match such $L$-functions (coming from geometry, rapidly computable) with $L$-functions of modular forms in certain cases, so that the former can be completed to inherit the good properties of the latter.  

\subsection*{Examples}

We conclude this section with two examples.

\begin{example} \label{exm:K3ex}
We return to our motivating example, with the parameters $\balpha = \{ \frac12, \frac14, \frac34\}$ and $\balpha=\{1,1,1\}$, we find $p_1=4$ and $q_1=\cdots=q_4=1$.  Then eliminating $x_1$ in \eqref{eqn:xiyj} gives
\[ V_\lambda \colon \lambda (y_1+y_2+y_3+y_4)^4 = y_1y_2y_3y_4; \]
and Theorem \ref{th-BCM} yields
\begin{equation*}
\#\overline{V_\lambda}(\mathbb F_q)
=\frac{q^3-1}{q-1}+H_q(\tfrac12,\tfrac14,\tfrac34;1,1,1 \,|\, 4^4\lambda).
\end{equation*}

We make a change of parameters $\lambda^{-1}=4^4\psi^4$ and consider the pencil of quartic K3 hypersurfaces with generically smooth fibers defined by
\begin{equation} \label{eqn:Xpsix04}
X_{\psi} \colon x_0^4+x_1^4+x_2^4+x_3^4 = 4\psi x_0x_1x_2x_3
\end{equation}
as in \eqref{eqn:XpsiK3}, with generic Picard rank $19$.  The family \ref{eqn:Xpsix04} is known as the Fermat--Dwork family and is well studied (going back to Dwork \cite[\S 6j, p.\ 73]{padic}; see e.g.\ Doran--Kelly--Salerno--Sperber--Voight--Whitcher \cite[\S 1.5]{DKSSVW18b} for further references).  In the context of mirror symmetry, one realizes $\overline{V_\lambda}$ as the mirror of $X_{\psi}$ \cite[\S 5.2]{DKSSVW18a} in the following way, due to
Batyrev: there is an action of $G=(\Z/4\Z)^3$ on $X_{\psi}$, and $V_{\lambda}$ is birational to $X_{\psi}/G$.  
We see again that the finite field hypergeometric sum $H(\tfrac12,\tfrac14,\tfrac34;1,1,1\,|\,\psi^{-4})$ contributes nontrivially to the point counts \cite[Main Theorem 1.4.1(a)]{DKSSVW18b}.

In either model, the holomorphic periods of $\overline{V_\lambda}$ or $X_{\psi}$ are given by the hypergeometric series
\begin{equation}
F(\tfrac12,\tfrac14,\tfrac34;1,1,1\,|\, \psi^{-4} ) = 
\sum_{n=0}^\infty\frac{(\frac12)_n(\frac14)_n(\frac34)_n}{n!^3}\,(4^4\lambda)^n
=\sum_{n=0}^\infty\frac{(4n)!}{n!^4}\,\lambda^n.
\end{equation}

As mentioned in the introduction, at the specialization $\psi^4=4^4\lambda=-1/48$, the K3 surface is singular, with Picard number $20$---it is this rare event that explains the formula \eqref{rama1}.  Computing the local $L$-factors, we find
\[ L_p(H(\tfrac12,\tfrac14,\tfrac34;1,1,1\,|\,{-1/48}),T) = (1-\chi(p)pT)(1-b_pT+p^2T^2) \]
for $p \neq 2,3$, where $\chi(p)=\displaystyle{\legen{12}{p}}$ is the quadratic character attached to $\Q(\sqrt{12})$ and $b_p \in \Z$ defined in \eqref{ap43}.  Indeed, this factorization agrees with the fact that the global $L$-series can be completed to 
\[ L(H(\tfrac12,\tfrac14,\tfrac34;1,1,1\,|\,{-1/48}),s)=L(f,s,\Sym^2) \]
where $f$ is the classical modular form with LMFDB label \href{http://www.lmfdb.org/ModularForm/GL2/Q/holomorphic/144/2/c/a/}{\textsf{144.2.c.a}}: more generally, see Elkies--Sch\"utt \cite{ES}, Doran--Kelly--Salerno--Sperber--Voight--Whitcher \cite[Theorem 5.1.3]{DKSSVW18a}, or Zudilin \cite[Observation~4]{Zu18}.  Consequently, the completed hypergeometric $L$-series inherits analytic continuation and functional equation.  
\begin{comment}
This way we find out that the local $L$-factors of the motive for \eqref{CSF}, \eqref{rama1} assume the form
$$
(1-\chi_{-3}(p)pT)(1-a(p)T+\chi_{-3}(p)^2\chi_{-4}(p)p^2T^2) \quad\text{for primes} \; p\ne2,
$$
where $a(p)=a_{\text{\eqref{rama1}}}(p)$ are given in \eqref{ap43}.

Similarly, the local $L$-factors for \eqref{rama1b} read
$$
(1-\chi_{-15}(p)pT)(1-a_{\text{\eqref{rama1b}}}(p)T+\chi_{-15}(p)^2\chi_{-43}(p)p^2T^2) \quad\text{for primes} \; p\ne43,
$$
where
$$
a_{\text{\eqref{rama1b}}}(p)=\begin{cases}
\bigl(\frac{-15}p\bigr)(x^2-2p) & \text{when}\; p=(x^2+43y^2)/4, \\
0 & \text{if}\; \bigl(\frac{-43}p\bigr)=-1.
\end{cases}
$$
\end{comment}
\end{example}

\begin{example}
We consider the hypergeometric data attached to Ramanujan-type formula~\eqref{rama2b}, corresponding to $\#13$ in Table \ref{tab1} and with parameters $\balpha=\{\tfrac12,\tfrac13,\tfrac23,\tfrac16,\tfrac56\}$ and $\bbeta=\{1,\dots,1\}$.  This example is, in many aspects, runs parallel to Example \ref{exm:K3ex} and the related mirror symmetry construction of the famous quintic threefold \cite{COGP91}.  We have
\[ V_\lambda \colon \lambda(y_1+y_2+\dots+y_6)^6= y_1y_2\cdots y_6 \]
and Theorem~\ref{th-BCM} implies
\begin{equation*}
\#\overline{V_\lambda}(\mathbb F_q)
=\frac{q^5-1}{q-1}+H(\tfrac12,\tfrac13,\tfrac23,\tfrac16,\tfrac56;1,1,1,1,1 \,|\, 6^6\lambda).
\end{equation*}

Alternatively, we consider the pencil of sextic fourfolds
\[ 
X_\psi\colon x_0^6+x_1^6+x_2^6+x_3^6+x_4^6+x_5^6=6\psi x_0x_1x_2x_3x_4x_5 \]
in $\mathbb P^5$.  Under the change of parameter $\lambda^{-1}=6^6 \psi^6$, we find that $V_\lambda$ is birational to $X_\psi/G$ where $G \simeq (\Z/6\Z)^5$.  
\begin{comment}
For fixed $\psi$, the equation admits the action of the discrete group
$$
G=\{(\zeta_6^{a_1},\dots,\zeta_6^{a_6}):a_1+\dots + a_6\equiv 0 \pmod 6\}\cong (\mathbb Z/6\mathbb Z)^5
$$
via the map $(X_1,\dots, X_6)\mapsto(\zeta_6^{a_1}X_1,\dots,\zeta_6^{a_6}X_6)$,
where $\zeta_6=e^{\pi i/3}$ is the primitive $6$-th root of unity.
Following the recipe of Batyrev, the mirror fourfold is constructed from the orbifold $\mathcal X_\psi/G$.
One way to realize the quotient is letting $y_j=X_j^6$ for $j=1,\dots,6$, $x_1=6\psi X_1\cdots X_6$ and $\lambda=(6\psi)^{-6}$,
\end{comment}
The $X_{\psi}$ are generically Calabi--Yau fourfolds.  A computation (analogous to Candelas--de la Ossa--Greene--Parks \cite{COGP91}) shows that the Picard--Fuchs differential operator is given by
$$
\biggl(\lambda\frac{\d}{\d\lambda}\biggr)^5
-6\lambda\biggl(6\lambda\frac{\d}{\d\lambda}+1\biggr)\biggl(6\lambda\frac{\d}{\d\lambda}+2\biggr)\biggl(6\lambda\frac{\d}{\d\lambda}+3\biggr)
\biggl(6\lambda\frac{\d}{\d\lambda}+4\biggr)\biggl(6\lambda\frac{\d}{\d\lambda}+5\biggr).
$$
The unique (up to scalar) holomorphic solution near zero is the hypergeometric function
$$
F(\balpha,\bbeta\,|\,6^6\lambda) = \sum_{n=0}^\infty\frac{(\frac12)_n(\frac13)_n(\frac23)_n(\frac16)_n(\frac56)_n}{n!^5}\,(6^6\lambda)^n
=\sum_{n=0}^\infty\frac{(6n)!}{n!^6}\,\lambda^n.
$$
% Resolving singularities as in \cite{BCM15}, one gets a Calabi--Yau fourfold $\overline{V_\lambda}$, which is the mirror manifold of $\mathcal X_\psi$.

Using the \textsc{Magma} implementation, we compute the first few (good) $L$-factors:
\begin{equation}
\begin{aligned}
L_7(T) &= (1+7^2 T)(1-7^4T^2)(1-35T+7^4T^2) \\
L_{11}(T) &= (1-11^2 T)(1-11^4 T^2)(1 - 30T + 11^4 T^2) \\
L_{13}(T) &= (1+13^2 T)(1 - 64T - 1758\cdot 13 T^2 - 64\cdot 13^4 T^3 + 13^8 T^4)
\end{aligned}
\end{equation}
and observe that for $p \neq 2,3,5$, 
\begin{equation} 
L_p(T) \overset?= (1-\chi_5(p)p^2 T)(1-a_p T + b_p p T - \chi_{129}(p) a_p p^4 T^3 + \chi_{129}(p)p^8 T^4).
\end{equation}
Moreover, when $\chi_{129}(p)=-1$ then $b_p=0$ and the quartic polynomial factors as
\[ 1-a_p T + b_p p T - \chi_{129}(p) a_p p^4 T^3 + \chi_{129}(p)p^8 T^4 \overset?= (1-p^4T^2)(1- a_p T + p^4T^2) \]
whereas for $\chi_{129}(p)=1$ it is generically irreducible.  This suggests again a rare event which we seek to explain using modular forms.  
\end{example}

\section{The Asai transfer of a Hilbert modular form}
\label{sec5}

Having defined $L$-functions arising from hypergeometric motives in the previous sections, over the next two sections we follow the predictions of the Langlands philosophy and seek to identify these $L$-functions as coming from modular forms in the cases of interest.  More precisely, we confirm experimentally a match with the Asai transfer of certain Hilbert modular forms over quadratic fields.  We begin in this section by setting up the needed notation and background.  As general references for Hilbert modular forms, consult Freitag \cite{Freitag} or van der Geer \cite{geer}; for a computational take, see Demb\'el\'e--Voight \cite{DV13}.

Let $F=\Q(\sqrt{d})$ be a real quadratic field of discriminant $d>0$ with ring of integers $\Z_F$ and Galois group $\Gal(F\,|\,\Q)=\langle \tau \rangle$.  By a \defi{prime} of $\Z_F$ we mean a nonzero prime ideal $\frakp \subseteq \Z_F$.  Let $v_1,v_2\colon F \hookrightarrow \R$ be the two embeddings of $F$ into $\R$.  For $x \in F$ we write $x_i \colonequals v_i(x)$, and for $\gamma \in \M_2(F)$ we write $\gamma_i = v_i(\gamma)$ for the coordinate-wise application of $v_i$.  An element $a \in F^\times$ is \defi{totally positive} if $v_1(a),v_2(a)>0$; we write $F_{>0}^\times$ for the group of totally positive elements.  The group
\begin{equation} 
\GL_2^+(F) \colonequals \{\gamma \in \GL_2(F) : \det \gamma \in F_{>0}^\times\} 
\end{equation}
acts on the product $\mathcal{H} \times \mathcal{H}$ of upper half-planes by embedding-wise linear fractional transformations $\gamma(z)\colonequals (\gamma_1(z_1), \gamma_2(z_2))$.

Let $k_1,k_2 \in 2\Z_{>0}$, write $k \colonequals (k_1,k_2)$, and let $k_0 \colonequals \max(k_1,k_2)$ and $w_0 \colonequals k_0-1$.  Let $\frakN \subseteq \Z_F$ be a nonzero ideal.  Let $S_k(\frakN;\psi)$ denote the (finite-dimensional) $\C$-vector space of Hilbert cusp forms of weight $k$, level $\Gamma_0(\frakN)$, and central character $\psi$.  Hilbert cusp forms are the analogue of classical cusp forms, but over the real quadratic field $F$.  When the narrow class number of $F$ is equal to $1$ (i.e., every nonzero ideal of $\Z_F$ is principal, generated by a totally positive element) and $\psi$ is the trivial character, a Hilbert cusp form $f \in S_k(\frakN)$ is a holomorphic function $f \colon \mathcal{H} \times \mathcal{H} \to \C$, vanishing at infinity, such that
\begin{equation} 
f(\gamma z) = (c_1z_1+d_1)^{k_1/2}(c_2z_2+d_2)^{k_2/2} f(z) 
\end{equation}
for all $\gamma=\begin{pmatrix} a & b \\ c & d \end{pmatrix} \in \GL_2^+(\Z_F)$ such that $c \in \frakN$.  

The space $S_k(\frakN)$ is equipped with an action of pairwise commuting Hecke operators $T_\frakp$ indexed by nonzero primes $\frakp \nmid \frakN$.  A Hilbert cusp form $f$ is a \defi{newform} if $f$ is an eigenform for all Hecke operators and $f$ does not arise from $S_k(\frakM)$ with $\frakM \mid \frakN$ a proper divisor.  

Let $f \in S_k(\frakN)$ be a newform.  For $\frakp \nmid \frakN$, we have $T_\frakp f = a_\frakp f$ with $a_\frakp \in \C$ a totally real algebraic integer (the \defi{Hecke eigenvalue}), and we factor
\[ 1-a_\frakp T + \Nm(\frakp)^{w_0} T^2 = (1-\alpha_\frakp T)(1-\beta_\frakp T) \in \C[T] \]
where $\Nm(\frakp)$ is the absolute norm.  Then $\abs{\alpha_\frakp}=\abs{\beta_\frakp}=\sqrt{p}^{w_0}$.  

For $p \in \Z$ prime with $p \nmid \Nm(\frakN)$, following Asai \cite{Asa77} we define, abbreviating $\frakp'=\tau(\frakp)$,
\begin{equation}\label{eqn:Asai-lf}
\begin{aligned}
&L_p(f,T,\As)
%\colonequals
\\
&\qquad
\colonequals
\begin{cases}
(1 - \alpha_\frakp \alpha_{\frakp'}T)(1 - \alpha_{\frakp'}\beta_\frakp T)(1 - \alpha_\frakp \beta_{\frakp'}T)(1 - \beta_\frakp \beta_{\frakp'}T), & \text{if $p\Z_F = \frakp \frakp'$ splits}; \\
(1 - \alpha_\frakp T)(1 - \beta_\frakp T)(1 - \psi(\frakp)p^{2w_0} T^2), & \text{if $p\Z_F=\frakp$ is inert}; \\
(1 - \alpha_\frakp^2 T)(1 - \beta_\frakp^2 T)(1 - \psi(\frakp)p^{w_0} T), & \text{if $p\Z_F = \frakp^2$ ramifies}.
\end{cases}
\end{aligned}
\end{equation}
We call the factors $L_p(f,T,\As)$ the \defi{good} $L$-factors of $f$.  The \defi{partial Asai $L$-function} of $f$ is the Dirichlet series defined by the Euler product
\begin{equation} \label{eqn:LSfs}
L_S(f,s,\As) \colonequals \prod_{p \not\in S} L_p(f,p^{-s},\As)^{-1}
\end{equation}
where $S=\{p : p \mid \Nm(\frakN)\}$.

The key input we need is the following theorem.  For a newform $f \in S_k(\frakN,\psi)$, let $\tau(f)$ be the newform of weight $(k_2,k_1)$ and level $\tau(\frakN)$ with $T_\frakp \tau(f) = a_{\tau(\frakp)} \tau(f)$, with central character $\psi \circ \tau$.  Finally, for central character $\psi$ (of the idele class group of $F$) let $\psi_0$ denote its restriction (to the ideles of $\Q$).  % : on ideals, we have $\psi_0(n\Z) \colonequals \psi( n\Z_F)$.

\begin{theorem}[{Krishnamurty \cite{Kr03}, Ramakrishnan \cite{Ra02}}] \label{thm:LfcnAsai}
Let $f \in S_k(\frakN,\psi)$ be a Hilbert newform, and suppose that $\tau(f)$ is not a twist of $f$.  Then the partial $L$-function $L_S(f,s,\As)$ can be completed to a $\Q$-automorphic $L$-function 
\[ \Lambda(f,s,\As) = N^{s/2} \Gamma_\C(s)^2 L(f,s,\As) \]
of degree $4$, conductor $N \in \Z_{>0}$, with central character $\psi_0^2$.  

More precisely, there exists a cuspidal automorphic representation $\Pi=\Pi_\infty \otimes (\bigotimes_p \Pi_p)$ of $\GL_4(\mathbb{A}_\Q)$ such that $L_p(f,p^{-s},\Asai)=L(s,\Pi_p)^{-1}$ for all $p \nmid \Nm(\frakN)$.  In particular, $L(f,s,\As)$ is entire and satisfies a functional equation $\Lambda(s) = \eps \overline{\Lambda}(8-s)$ with $\abs{\eps}=1$. 
\end{theorem}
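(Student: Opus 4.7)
The plan is to realize $\Pi$ as a descent from $F$ to $\Q$ of the Rankin--Selberg tensor product $\pi \boxtimes \pi^\tau$, where $\pi$ denotes the cuspidal automorphic representation of $\GL_2(\mathbb{A}_F)$ attached to $f$ via the standard adelization of Hilbert modular forms, and $\pi^\tau$ is its Galois conjugate. First, I would apply Ramakrishnan's theorem on functoriality for the tensor product $\GL_2 \times \GL_2 \to \GL_4$ to produce an isobaric automorphic representation $\Sigma \colonequals \pi \boxtimes \pi^\tau$ of $\GL_4(\mathbb{A}_F)$, whose $L$-function agrees with the Rankin--Selberg convolution $L(s, \pi \times \pi^\tau)$. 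The hypothesis that $\tau(f)$ is not a twist of $f$ is equivalent to the statement that $\pi^\tau \not\simeq \pi \otimes \chi$ for any Hecke character $\chi$ of $F$; analyzing the pole of $L(s, \pi \times (\pi^\tau \otimes \chi)^\vee)$ at $s=1$ then shows $\Sigma$ is cuspidal. The representation $\Sigma$ is manifestly $\tau$-invariant, since $\Sigma^\tau = \pi^\tau \boxtimes \pi \simeq \Sigma$.

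Next, I would invoke Arthur--Clozel cyclic base change for the quadratic extension $F/\Q$, which characterizes the $\tau$-invariant cuspidal automorphic representations of $\GL_4(\mathbb{A}_F)$ as base changes of cuspidal automorphic representations of $\GL_4(\mathbb{A}_\Q)$, unique up to twist by $\chi_{F/\Q}$. Thus there are exactly two choices $\Pi$ and $\Pi \otimes \chi_{F/\Q}$ with $\BC_{F/\Q}(\Pi) = \Sigma$; I would select the correct descent by matching the local Langlands parameter at a single prime $p = \frakp\frakp'$ split in $F$, where base change amounts to restriction of parameters and the putative Asai parameter is $\mathrm{rec}(\pi_\frakp) \otimes \mathrm{rec}(\pi_{\frakp'})$, producing the four-factor product in the first case of \eqref{eqn:Asai-lf}.

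Finally, I would verify the local $L$-factors at inert and ramified primes by a direct computation with the Asai representation of the Weil--Deligne group (given by tensor induction from $\GL_2(\C) \times \GL_2(\C) \rtimes \Z/2\Z$), matching the second and third cases of \eqref{eqn:Asai-lf}; entirety, the functional equation, and the gamma factors then follow from the standard theory of automorphic $L$-functions on $\GL_4$, while the conductor $N$ and the central character $\psi_0^2$ are read off from the local analysis at ramified primes and at the archimedean places. The main technical obstacle is the cuspidality criterion: the non-twist hypothesis must be converted into a precise statement ruling out every partial isobaric decomposition of $\Sigma$, which requires a careful case analysis of the residues of Rankin--Selberg $L$-functions at $s=1$ over all Hecke character twists, and this is where most of the work in Krishnamurty and Ramakrishnan is concentrated.
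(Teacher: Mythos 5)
Your route is genuinely different from the one the paper takes. The paper's proof is essentially a reduction to a citation: it identifies $\GL_2(\A_F)$ with $\Res_{F|\Q}\GL_2(\A_\Q)$, writes down the four-dimensional tensor-induced representation $r$ of the $L$-group, defines $\As(\pi_p)$ by applying the local Langlands correspondence to $r_p\circ\phi_p$, and then quotes Ramakrishnan (Theorem D) and Krishnamurty (Theorem 6.7) wholesale for the automorphy of $\As(\pi)$, and Theorem D(b) for cuspidality under the non-twist hypothesis. What you propose is in effect a sketch of the proof of that black box, and it is close to Ramakrishnan's own argument, which does proceed by establishing $\BC_{F/\Q}(\As(\pi))\simeq\pi\boxtimes\pi^{\tau}$ and descending; Krishnamurty's proof goes instead through the Langlands--Shahidi method. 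Your approach buys an actual construction, at the cost of importing two further deep inputs (tensor-product functoriality for $\GL_2\times\GL_2$ over $F$, and Arthur--Clozel cyclic base change for $\GL_4$) that the paper never needs to invoke.

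There is, however, one step that fails as written: you cannot select the correct descent ``by matching the local Langlands parameter at a single prime $p=\frakp\frakp'$ split in $F$.'' The two candidate descents are $\Pi$ and $\Pi\otimes\chi_{F/\Q}$, and $\chi_{F/\Q}$ is unramified with $\chi_{F/\Q}(\Frob_p)=1$ precisely at the split primes; hence $\Pi_p\simeq(\Pi\otimes\chi_{F/\Q})_p$ for every split $p$, and the comparison there is vacuous. The disambiguation must happen at inert primes, where the tensor-induced parameter yields $(1-\alpha_\frakp T)(1-\beta_\frakp T)(1-\psi(\frakp)p^{2w_0}T^2)$ while the twist replaces $\alpha_\frakp,\beta_\frakp$ by $-\alpha_\frakp,-\beta_\frakp$ --- these differ whenever $a_\frakp\neq 0$ --- or else by some other global criterion. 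Relatedly, your cuspidality argument via poles of $L(s,\pi\times(\pi^{\tau}\otimes\chi)^{\vee})$ only rules out $\GL_1$-constituents of $\Sigma$; the possible decomposition into two cuspidal $\GL_2$-pieces (the dihedral case, where $\pi$ and $\pi^{\tau}$ are induced from a common quadratic extension without being twists of one another) needs separate treatment, as you acknowledge at the end --- that case analysis is exactly where the cited theorems concentrate their effort, and without it the Arthur--Clozel descent step (which you apply to a cuspidal $\Sigma$) is not yet available.
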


The automorphic representation $\Pi$ in Theorem \ref{thm:LfcnAsai} goes by the name \defi{Asai transfer}, \defi{Asai lift}, or \defi{tensor induction} of the automorphic representation $\pi$ attached to $f$, and we write $\Pi=\As(\pi)$.

\begin{proof}
We may identify $\GL_2(\A_F) \cong \Res_{F|\Q} \GL_2(\A_\Q)$, with $L$-group
\begin{equation}
{}^L(\Res_{F|\Q}\GL_2) \cong \GL_2(\C) \times \GL_2(\C)\rtimes \Gal_\Q,
\end{equation}
where $\Gal_\Q \colonequals \Gal(\Qbar\,|\,\Q)$.  We define the $4$-dimensional representation 
\begin{equation}
\begin{aligned}
r \colon \GL_2(\C) \times \GL_2(\C)\rtimes \Gal_\Q &\to \GL(\C^2 \otimes \C^2) \simeq \GL_4(\C) \\
r(g_1, g_2, \sigma) &=
\begin{cases} 
g_1 \otimes g_2, & \text{ if $\sigma|_F = \id$;} \\
g_2 \otimes g_1, & \text{ if $\sigma|_F = \tau$.}
\end{cases}
\end{aligned}
\end{equation}
For a place $v$ of $\Q$, let $r_v$ be the restriction of $r$ to $\GL_2(\C) \times \GL_2(\C) \rtimes \Gal_{\Q_v}$.  

Let $\pi= \pi_{\infty} \otimes (\bigotimes_p \pi_{p})$ be the cuspidal automorphic representation of $\GL_2(\A_F)$ attached to $f$.  Then $\pi_{p}$ is an admissible representation of $\GL_2(F \otimes \Q_p)$ corresponding to an $L$-parameter
\begin{equation} 
\phi_{p} \colon W_p' \to \GL_2(\C) \times \GL_2(\C)\rtimes \Gal_{\Q_p}, 
\end{equation}
where $W_p'$ is the Weil--Deligne group of $\Q_p$.  We define $\As(\pi_{p})$ to be the irreducible admissible representation of $\GL_4(\Q_p)$ attached 
to $r_p \circ \phi_p$ by the local Langlands correspondence, and we combine these to
\begin{equation}
\As(\pi) \colonequals \As(\pi_{\infty}) \otimes \bigotimes_p \As(\pi_{p}).
\end{equation}

By a theorem of Ramakrishnan \cite[Theorem D]{Ra02} or Krishnamurty \cite[Theorem 6.7]{Kr03}, $\As(\pi)$ is an automorphic representation of $\GL_4(\A_\Q)$ whose $L$-function is defined by
\begin{equation}
L(s, \pi, \As) \colonequals L(s, \pi_\infty, r_\infty \circ \phi_\infty)\prod_{p} L(s, \pi_p, r_p \circ \phi_p)
\end{equation}
whose good $L$-factors agree with \eqref{eqn:Asai-lf} \cite[\S 4]{Kr03}.  Under the hypothesis that $\tau(f)$ is not a twist of $f$, we conclude that $\As(\pi)$ is cuspidal \cite[Theorem D(b)]{Ra02}. 
% In particular, when $p$ is unramified and $\pi_{p}$ is spherical, we have 
% $$L(s, \pi_p, r_p^{\pm}) = \det(1 - r^{\pm}(A(\pi_p))p^{-s})^{-1},$$
% where $A(\pi_p)$ is the Satake parameter of $\phi_p$ in ${}^L(\Res_{F/\Q}\GL_2)(\C)$, i.e. the semisimple
% conjugacy class of $r_p^{\pm}\circ \phi_p$. 
Consequently, we may take $\Pi=\As(\pi)$ in the theorem.
\end{proof}

\begin{remark}
Some authors also define the representation $\As^-(\pi)$, which is the quadratic twist of $\As(\pi)$ by the quadratic character attached to $F$.
\end{remark}

In addition to the direct construction \eqref{eqn:LSfs} and the automorphic realization in Theorem \ref{thm:LfcnAsai}, one can also realize the Asai $L$-function via Galois representations.  By Taylor \cite[Theorem 1.2]{Tay89}, attached to $f$ is a Galois representation
\[ \rho \colon \Gal_F \to \GL(V) \simeq \GL_2(\Qlbar) \]
such that for each prime $\frakp \nmid \frakN$, we have
\[ \det(1-\rho(\Frob_\frakp)T) = 1 - a_\frakp T + \Nm(\frakp)^{w_0} T^2. \]
Then there is a natural extension of $\rho$ to $\Gal_\Q$, a special case of \defi{multiplicative induction} (or \defi{tensor induction}) \cite[\S 7]{Pr92} defined as follows: for a lift of $\tau$ to $\Gal_\Q$ which by abuse is also denoted $\tau$, we define \cite[p.~1363]{Kr02a} (taking a left action)
\begin{equation}
\begin{aligned}
\As(\rho) \colon \Gal_\Q &\to \GL(V \otimes V) \simeq \GL_4(\Qlbar) \\
\As(\rho)(\sigma)(x \otimes y) &= 
\begin{cases}
\rho(\sigma)(x) \otimes \rho(\tau^{-1}\sigma \tau)(y), & \text{if $\sigma|_F=\id$;} \\
\rho(\sigma\tau)(y) \otimes \rho(\tau^{-1}\sigma)(x), & \text{if $\sigma|_F=\tau|_F$.} 
\end{cases}
\end{aligned}
\end{equation}
Up to isomorphism, this representation does not depend on the choice of lift $\tau$.  A direct computation \cite[Lemma 3.3.1]{Kr02a} then verifies that $\det(1-\As(\rho)(\Frob_p)T)=L_p(f,T,\As)$ as defined in \eqref{eqn:LSfs}.

The bad $L$-factors $L_p(f,T,\Asai)$ and conductor $N$ of $L(f,s,\Asai)$ are uniquely determined by the good $L$-factors, but they are not always straightforward to compute. 

\section{Matching the hypergeometric and Asai $L$-functions}
 \label{sec6}
 
%In this section, we
We now turn to the main conjecture of this paper.

\subsection*{Main conjecture}

We propose the following conjecture.

\begin{conjecture} \label{mainconj}
Let $\balpha$ be a set of parameters from Table~\textup{\ref{tab1}} and let $\bbeta=\{1,1,1,1,1\}$.  
Then there exist quadratic Dirichlet characters $\chi,\varepsilon$ and a Hilbert cusp form $f$ over a real quadratic field $F$ of weight $(2,4)$ such that for all good primes $p$ we have
\[ L_p(H(\balpha,\bbeta\,|\,z), T) \overset?= (1-\chi(p)p^2T)L_p(f,T/p,\Asai,\varepsilon). \]
In particular, we have the identity
\[ L(H(\balpha,\bbeta\,|\,z),s) \overset?= L(s-2,\chi)L(f,s+1,\Asai,\varepsilon). \]
\end{conjecture}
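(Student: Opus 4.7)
The plan is to lift the conjectural equality of $L$-functions to an isomorphism of $\ell$-adic Galois representations, and then to establish modularity of the resulting two-dimensional piece over $F$. For each prime $\ell$ and each good $p$, the hypergeometric data $(\balpha,\bbeta,z)$ determine, via Katz \cite{Ka90} and Beukers--Cohen--Mellit \cite{BCM15}, a $5$-dimensional $\ell$-adic Galois representation $\rho_{\mathrm{HG}}\colon\Gal_\Q\to\GL_5(\Qlbar)$ cut out of the middle $\ell$-adic cohomology of a toric (partial) desingularisation of $\overline{V_\lambda}$ at $\lambda=z/M$, whose Frobenius characteristic polynomial at good $p$ is $L_p(H(\balpha,\bbeta\,|\,z),T)$. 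The generic Hodge numbers $h^{p,q}=1$ for $p+q=4$ force Frobenius eigenvalues, up to twist, into the Weil range $p^{0},p^{1},p^{2},p^{3},p^{4}$.

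The first step is to \emph{split off a Tate summand}. The predicted factor $(1-\chi(p)p^{2}T)$ says that $\rho_{\mathrm{HG}}$ has a one-dimensional subrepresentation isomorphic to $\chi\otimes\Qlbar(-2)$, equivalently that there is a codimension-two algebraic cycle on $\overline{V_\lambda}$, defined over the quadratic field cut out by $\chi$, which is \emph{not} present at the generic point. This is the rank-$5$ analogue of the Picard-rank jump in Example~\ref{exm:K3ex}; such a class has to be produced geometrically at each $z$ from Table~\ref{tab1}. The second step is to recognise the complementary $4$-dimensional quotient $W$ as an Asai tensor induction: after absorbing the twist $\eps$, I want a $2$-dimensional $\rho\colon\Gal_F\to\GL_2(\Qlbar)$ with $\As(\rho)\simeq W\otimes\eps$. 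By the characterisation of tensor induction \cite[\S 7]{Pr92}, this amounts to exhibiting an $F$-linear tensor factorisation $W|_{\Gal_F}\simeq U\otimes\tau^{\ast}U$ that is compatible with the $\Gal_\Q$-action. Once this factorisation is in place, the split/inert/ramified shapes of \eqref{eqn:Asai-lf} can be read off directly: a split prime $p=\frakp\frakp'$ produces the four products of eigenvalues of $\rho(\Frob_\frakp)$ with eigenvalues of $\rho(\Frob_{\frakp'})$, while an inert $p$ forces Frobenius to swap the two tensor factors, producing the $(1-\alpha_\frakp T)(1-\beta_\frakp T)(1-\psi(\frakp)p^{2w_0}T^{2})$ form.

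With $\rho$ in hand, the final step is to invoke modularity. One would check that $\rho$ is absolutely irreducible, totally odd, crystalline at primes above $\ell$ with Hodge--Tate weights $\{0,1\}$ at one embedding and $\{0,3\}$ at the other (matching weight $(2,4)$), and of the predicted conductor dividing a power of the primes in $S_\lambda$. Modularity theorems for two-dimensional representations of $\Gal_F$ with $F$ real quadratic (Skinner--Wiles, Kisin, Gee, Barnet-Lamb--Gee--Geraghty, Sasaki) then produce a Hilbert newform $f$ of weight $(2,4)$ with $\rho_f\simeq\rho$. Theorem~\ref{thm:LfcnAsai} identifies $L(f,s,\Asai)$ with $\det(1-\As(\rho)(\Frob_p)p^{-s})^{-1}$ at the good primes, and combining this with the Tate summand gives exactly the identity in Conjecture~\ref{mainconj}.

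The main obstacle is the second step: simultaneously producing the codimension-two cycle class \emph{and} the $F$-rational tensor factorisation of the residual four-dimensional piece. Both are cohomological incarnations of the hypothesis that the motivic Galois group drops strictly at the specialisation $\lambda=z/M$; matching Frobenius traces, as the authors do experimentally, is strong evidence but yields neither the cycle nor the factorisation. A plausible route is to exploit the mirror description $\overline{V_\lambda}\sim X_\psi/G$ and search, at these specialisations, for an abelian surface scheme of $\GL_2$-type over $F$ inside $X_\psi/G$ whose relative $H^{1}$, tensored with its $\tau$-conjugate, accounts for the missing $4$-dimensional summand; this would produce $\rho$ directly and reduce the conjecture to an instance of known modularity. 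The special $abc$-structure of the values $z$ noted in Remark~\ref{par-structure0} is presumably the trace of such an abelian scheme, and pinning this geometry down is where the work lies.
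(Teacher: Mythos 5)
The statement you are addressing is a \emph{conjecture}: the paper offers no proof of it, only numerical evidence. What the authors actually do (see the \emph{Evidence} and \emph{Method} subsections of section \ref{sec6}) is compute the good $L$-factors of the hypergeometric motive in \textsc{Magma}, use split ordinary primes to reverse-engineer candidate Hecke eigenvalues $a_\frakp$ up to sign from the $p$-adic valuations of the roots of $Q_p(T)$, search spaces of Hilbert newforms of weight $(2,4)$ for a match, and then verify agreement of all good $L$-factors up to prime norm $200$ together with a $20$-digit numerical check of the functional equation. Your proposal is therefore not comparable to ``the paper's proof''; it is a roadmap for a proof that the authors do not claim to have. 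Judged on its own terms, the roadmap is reasonable in outline --- split off a Tate class $\chi\otimes\Qlbar(-2)$, identify the $4$-dimensional complement as a tensor induction $\As(\rho)$, and invoke modularity over $F$ plus Theorem \ref{thm:LfcnAsai} --- but the two steps you flag as ``where the work lies'' are precisely the content of the conjecture. Producing the codimension-two cycle and the $\Gal_F$-equivariant factorisation $W|_{\Gal_F}\simeq U\otimes\tau^{\ast}U$ is not a technical refinement of the argument; it \emph{is} the argument, and nothing in your write-up advances it. Matching Frobenius characteristic polynomials (which is all the paper has, and all your strategy would have before these steps) does not yield either object.

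One concrete error in your final paragraph: an abelian surface of $\GL_2$-type over $F$ cannot be the source of $\rho$. Its relative $H^1$ has Hodge--Tate weights $\{0,1\}$ at \emph{both} embeddings, so it corresponds to a Hilbert newform of parallel weight $(2,2)$, and its Asai square has Hodge--Tate weights $(0,1)\otimes(0,1)=(0,1,1,2)$. The paper's forms have weight $(2,4)$, with Hodge--Tate weights $\{0,3\}$ and $\{1,2\}$ at the two embeddings and Asai weights $(0,3)\otimes(1,2)=(1,2,4,5)$ (this is stated explicitly in the \emph{Method} subsection, and is exactly what makes the Tate twist $L_p(f,T/p,\Asai)$ match the degree-$4$ piece of the weight-$4$ hypergeometric motive). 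Your own earlier sentence assigning weights ``$\{0,1\}$ at one embedding and $\{0,3\}$ at the other'' is also off: the correct pair is $\{1,2\}$ and $\{0,3\}$, as the determinants must agree. So the geometric object you would need inside $X_\psi/G$ is not an abelian scheme but something with a nontrivially mixed Hodge structure across the two embeddings (e.g.\ a piece of $H^3$ of a Kuga--Sato-type family), and the modularity theorems you cite would have to be applied in non-parallel weight, where the hypotheses (residual irreducibility, oddness, and the relevant Fontaine--Laffaille or ordinarity conditions) all still need to be verified for a representation you have not constructed.
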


We can be more precise in Conjecture \ref{mainconj} for some of the rows, as follows.  Let $\#n$ be a row in Table~\textup{\ref{tab1}} with $n \neq 7,13,14,15,16$.  Then we conjecture that the central character $\psi$ of $f$ is a quadratic character of the class group of $F$ induced from a Dirichlet character; and the conductors of $\chi,\varepsilon,\psi$, the discriminant $d_F$ of $F$, and the level $\frakN$ of $f$ are indicated in Table~\textup{\ref{tab2}}.

\begin{equation} \label{tab2}\addtocounter{equation}{1} \notag
\begin{gathered}
{\renewcommand{\arraystretch}{1.2}
\begin{tabular}{r|cc|ccccc} 
\# & $\balpha$  & $z$ & $\chi$ & $d_F$ & $\mathfrak{N}$ & $\psi$ & $\varepsilon$ \\
\hline
\hline
 1, 2 & $\{\tfrac12,\tfrac12,\tfrac12,\tfrac12,\tfrac12\}$ & $-2^{\pm 2} \vphantom{|^{0^0}}$ & $1 \vphantom{|^{0^0}}$ & $5$ & $(4)$ & $1$ & $1$ \\
 3, 4 & $\{\tfrac12,\tfrac12,\tfrac12,\tfrac12,\tfrac12\}$ & $-2^{\pm 10}$ & $1$ & $41$ & $(1)$ & $1$ & $1$ \\
 5 & $\{\tfrac12,\tfrac12,\tfrac12,\tfrac13,\tfrac23\}$ & $(3/4)^3$ & $1$ & $37$ & $(1)$ & $1$ & $1$ \\
 6 & $\{\tfrac12,\tfrac12,\tfrac12,\tfrac13,\tfrac23\}$ & $-3^3$ & $1$ & $28$ & $(8)$ & $1$ & $-4$ \\
 7 & $\{\tfrac12,\tfrac15,\tfrac25,\tfrac35,\tfrac45\}$ & $-5^5/2^8$ & $1?$ & $69$ & ? & ? & $1?$ \\
 8 & $\{\tfrac12,\tfrac12,\tfrac12,\tfrac14,\tfrac34\}$ & $1/2^4$ & $1$ & $60$ & $(4)$ & $3$ & $1$ \\
 9 & $\{\tfrac12,\tfrac13,\tfrac23,\tfrac14,\tfrac34\}$ & $-1/48$ & $1$ & $12$ & $(81)$ & $1$ & $1$ \\
10 & $\{\tfrac12,\tfrac13,\tfrac23,\tfrac14,\tfrac34\}$ & $-3^3/2^4$ & $1$ & $172$ & $(4)$ & $1$ & $1$ \\
11 & $\{\tfrac12,\tfrac13,\tfrac23,\tfrac16,\tfrac56\}$ & $-(3/4)^6$ & $1$ & $193$ & $(1)$ & $1$ & $1$ \\
12 & $\{\tfrac12,\tfrac13,\tfrac23,\tfrac16,\tfrac56\}$ & $(3/5)^6$ & $1$ & $76$ & $(8)$ & $1$ & $-4$ \\
13 & $\{\tfrac12,\tfrac13,\tfrac23,\tfrac16,\tfrac56\}$ & $-1/80^3$ & $5?$ & $129$ & ? & ? & $1?$ \\
14 & $\{\tfrac12,\tfrac14,\tfrac34,\tfrac16,\tfrac56\}$ & $-1/2^{10}$ & $12?$ & $492$ & ? & ? & $1?$ \\
15 & $\{\tfrac12,\tfrac18,\tfrac38,\tfrac58,\tfrac78\}$ & $1/7^4$ & $28?$ & $168$ & ? & ? & $-7?$ \\
\end{tabular}} \\
\text{Table \ref{tab2}: Hilbert modular form data}
\end{gathered}
\end{equation}

\subsection*{Evidence}

We verified Conjecture \ref{mainconj} for the rows indicated in Table \ref{tab2} using \textsc{Magma} \cite{Magma}; the algorithms for hypergeometric motives were implemented by Watkins, algorithms for $L$-functions implemented by Tim Dokchitser, and algorithms for Hilbert modular forms by Demb\'el\'e, Donnelly, Kirschmer, and Voight.  The code is available online \cite{codeonline}.

Moreover, using the $L$-factor data in Table \ref{tab3}, we have confirmed the functional equation for $L(H(\balpha,\bbeta\,|\,z),s)$ up to 20 decimal digits for all but \#13.  When the discriminant $d_F$ and the level $\frakN$ are coprime, we observe that the conductor of $L(f,s,\Asai)$ is  $N=d_F\Nm(\frakN)$.

\begin{equation} \label{tab3}\addtocounter{equation}{1} \notag
\begin{gathered}
{\renewcommand{\arraystretch}{1.1}
\begin{tabular}{c|c|ccc}
\# & $N$ & $p$ & $\ord_p(N)$ & $L_p(f,T,\Asai,\varepsilon)$ \\
\hline
\hline
 \multirow{2}{*}{1, 2} & \multirow{2}{*}{$80$} &  $2$ & $4$ & $1$ \\
& & $5$ & $1$ & $(1-p^2T)(1+6pT+p^4T^2)$ \\
 \hline
 \multirow{2}{*}{3, 4} & \multirow{2}{*}{$41$} & $2$ & $0$ & $1+5T+5pT^2+5p^4T^3+p^8T^4$ \\
 & & $41$ & $1$ & $(1-p^2T)(1-18pT-p^4T^2)$ \\
 \hline
 \multirow{3}{*}{5} & \multirow{3}{*}{$37$} & $2$ & $0$ & $(1-p^2T)^2(1+3pT+p^4T^2)$ \\
   & & $3$ & $0$ & $1+11T+28pT^2+11p^4T^3+p^8T^4$ \\
   & & $37$ & $1$ & $(1-p^2T)(1+70pT+p^4T^2)$ \\
   \hline
 \multirow{3}{*}{6} & \multirow{3}{*}{$112$} & $2$ & $4$ & $1-p^4T^2$ \\
   & & $3$ & $0$ & $1+8T+10pT^2+8p^4T^3+p^8T^4$ \\
   & & $7$ & $1$ & $(1+p^2T)(1+46T+p^4T^2)$ \\
   \hline
 \multirow{3}{*}{7} & \multirow{3}{*}{$69$} & $2$ & $0$ & $(1-p^4T^2)(1+p^4T^2)$ \\
   & & $3$ & $1$ & $(1+p^2T)(1+5T+p^4T^2)$ \\
   & & $5$ & $0$ & $1+4T-14pT^2+4p^4T^3+p^8T^4$ \\
   & & $23$ & $1$ & $(1+p^2T)(1-470T+23^4T^2)$ \\
   \hline   
 \multirow{3}{*}{8} & \multirow{3}{*}{$60$} & $2$ & $2$ & $(1-p^2T)(1+3pT+p^4T^2)$ \\
   & & $3$ & $1$ & $(1+p^2T)(1+2T+p^4T^2)$ \\
   & & $5$ & $1$ & $(1+p^2T)(1-2T+p^4T^2)$ \\
   \hline
 \multirow{2}{*}{9} & \multirow{2}{*}{$972$} & $2$ & $2$ & $(1-p^2T)(1-p^4T^2)$ \\
  & & $3$ & $5$ & $1-p^4T^2$ \\ 
  \hline
 \multirow{3}{*}{10} & \multirow{3}{*}{$172$} & $2$ & $2$ & $(1-p^2T)(1-p^4T^2)$ \\
  & & $3$ & $0$ & $1+14T+34pT^2+14p^4T^3+p^8T^4$ \\
  & & $43$ & $1$ & $(1-p^2T)(1+22pT+p^4T^2)$ \\
  \hline
 \multirow{2}{*}{11} & \multirow{2}{*}{$193$} & $2$ & $0$ & $(1-p^4T^2)^2$ \\
 & & $193$ & $1$ & $(1-p^2T)(1+361pT+p^4T^2)$ \\
 \hline
 \multirow{4}{*}{12} & \multirow{4}{*}{$304$} & $2$ & $4$ & $1-p^4T^2$ \\
   & & $3$ & $0$ & $1+5T-8pT^2+5p^4T^3+p^8T^4$ \\
   & & $5$ & $0$ & $1-250pT^2+p^8T^4$ \\
   & & $19$ & $1$ & $(1+p^2T)(1+178T+p^4T^2)$ \\
 \hline
 \multirow{3}{*}{14} & \multirow{3}{*}{$850176$} & $2$ & $8$ & $1+p^2T$ \\
   & & $3$ & $4$ & $1-p^2T$ \\
   & & $41$ & $1$ & $(1+p^2T)(1-32pT^2+p^4T^2)$ \\
 \hline
 \multirow{3}{*}{15} & \multirow{3}{*}{$59006976$} & $2$ & $13$ & $1$ \\
   & & $3$ & $1$ & $(1+p^2T)(1-4T+p^4T^2)$ \\
   & & $7$ & $4$ & $1+p^2T$ \\
\end{tabular}} \\
\text{Table \ref{tab3}: $L$-factor data for $L_p(H(\balpha,\bbeta\,|\,z),T) \overset?=L_p(\chi,T)L_p(f,T,\Asai,\varepsilon)$}
\end{gathered}
\end{equation}

\begin{remark}
\label{new-Guillera}
In a recent arithmetic study of his formulas for $1/\pi^2$, Guillera \cite{Gu20} comes up with an explicit recipe to cook up the two quadratic characters for each such formula.
He calls them $\chi_0$ and $\varepsilon_0$ and records them in \cite[Table~3]{Gu20}.
Quite surprisingly, they coincide with our $\chi$ and $\varepsilon$ in Table~\ref{tab2}.
\end{remark}

\begin{example}
Consider row $\#1$.  In the space $S_{(2,4)}(4)$ of Hilbert cusp forms over $F=\Q(\sqrt{5})$ of weight $(2,4)$ and level $(4)$ with trivial central character, we find a unique newform $f$ with first few Hecke eigenvalues $a_{(2)}=0$, $a_{(3)}=-30$, $a_{(\sqrt{5})}=-10$, $a_{(7)}=-70$, and $a_{\frakp},a_{\tau(\frakp)}=12 \pm 8\sqrt{5}$, giving for example 
\[ L_3(f,T,\Asai)=(1-3^6T^2)(1+10\cdot 3T+3^6T^2); \]
we then match
\[ L_3(H(\tfrac12,\dots,\tfrac12;1,\dots,1\,|\,{-1/2^2}),T) = (1-3^2T)L_3(f,T/3,\Asai). \]
We matched $L$-factors for all good primes $p$ such that a prime $\frakp$ of $F$ lying over $p$ has $\Nm(\frakp) \leq 200$.  
\end{example}

\begin{example}
For row $\#9$, the space of Hilbert cusp forms over $F=\Q(\sqrt{12})$ of weight $(2,4)$ and level $\frakN=(81)$ has dimension $2186$ with a newspace of dimension $972$.  We find a form $f$ with Hecke eigenvalues $a_{(5)}=140$, $a_{(7)}=98$, \dots; accordingly, we find
\begin{equation} 
\begin{aligned}
L_5(H(\balpha,\bbeta\,|\,{-1/48}),T) &= (1-5^2T)L_5(f,T/5,\Asai) \\
&= (1-5^2 T)^2(1+5^2T)(1+28T+625T^2), 
\end{aligned}
\end{equation}
and so on.  We again matched Hecke eigenvalues up to prime norm $200$.
\end{example}

\begin{remark}
To match row $\#16$ in Table \ref{tab1} with a candidate Hilbert modular form, we would need to extend the implementation of hypergeometric motives to apply for specialization at points $z \not\in \Q$; we expect this extension to be straightforward, given the current implementation of finite field hypergeometric sums.  

By contrast, to match the final rows $\#7$ and $\#13$--$\#15$, we run into difficulty with computing spaces of Hilbert modular forms: we looked for forms in low level, but the dimensions grow too quickly with the level.  We also currently lack the ability to efficiently compute with arbitrary nontrivial central character.  We plan to return to these examples with a new approach to computing systems of Hecke eigenvalues for Hilbert modular forms in future work.
\end{remark}

\begin{remark}
\label{par-structure}
Returning to Remark \ref{par-structure0}, we observe structure in the specialization points $z$ from Table~\ref{tab1}: beyond patterns in the factorization of $z$ and $1-z$, we also note that for these points the completed $L$-function typically has unusually small conductor~$N$, as in Table \ref{tab3}.  (Perhaps a twist of \#15 has smaller conductor?)  Some general observations that may explain this conductor drop:
\begin{itemize}
\item Factor $N = N_1N_2$ where $N_1$ consists of the product of primes $p \mid N$ that divide the least common denominator of $\balpha$ or the numerator or denominator of $z$.  Then $N_2$ should be the squarefree part of the numerator of $1-z$; this numerator is divisible by a nontrivial square in ten of the fifteen cases.
\item The power of $p$ dividing the numerator or denominator of $z$ is itself a multiple of $p$ for most primes $p$ dividing a denominator in $\balpha$.  
\item For a prime $p$, define $s_p(\alpha) = 0$ if $\alpha$ is coprime to $p$ and otherwise let $s_p(\alpha) = \ord_p(\alpha)+1/(p-1)$.  If $\ord_p(z)$ is a multiple of $\sum_{j=1}^5s_p(\alpha_j)$, then $\ord_p(N)$ tends to be especially small.
\end{itemize}
These last two phenomena were first observed by Rodriguez--Villegas; we thank the referee for these observations.

While not making any assertions about completeness, these observations give some indication of why our Table~\ref{tab1} is so short: the specialization points $z$ like those listed are quite rare, and they seem to depend on a pleasing but remarkable arithmetic confluence.  It would be certainly valuable to be able to predict more generally and precisely the conductor of hypergeometric $L$-functions.
\end{remark}

\subsection*{Method}

We now discuss the recipe by which we found a match.  For simplicity, we exclude the case $\#8$ and suppose that the central character $\psi$ is trivial.  In a nutshell, our method uses good split ordinary primes to recover the Hecke eigenvalues up to sign.  

We start with the hypergeometric motive and compute $L_p(H,T) \colonequals L_p(H(\balpha,\bbeta\,|\,z),T)$ for many good primes $p$.  We first guess $\chi$ and $d_F$ by factoring $L_p(H,T)=(1-\chi(p)p^2 T)Q_p(T)$: for primes $p$ that are split in $F$, we usually have $Q_p(T)$ irreducible whereas and for inert primes we find $(1-p^4 T^2) \mid Q_p(T)$.  We observe in many cases that $d_F$ is (up to squares) the numerator of $1-z$.  Combining this information gives us a good guess for $\chi$ and $d_F$.  

We now try to guess the Hecke eigenvalues of a candidate Hilbert newform $f$ of weight $(2,4)$.  Let $p=\frakp \tau(\frakp)$ be a good split prime, and suppose that $p$ is \defi{ordinary} for $f$, i.e., the normalized valuations $\ord_p(a_\frakp),\ord_p(a_{\tau(\frakp)})=0,1$ are as small as possible, or equivalently, factoring
\begin{equation}
\begin{aligned}
L_\frakp(f,T) &= 1-a_\frakp T + p^3T^2=(1-\alpha_\frakp T)(1-\beta_\frakp T)  \\
L_{\tau(\frakp)}(f,T) &= 1-a_{\tau(\frakp)} T + p^3T^2=(1-\alpha_{\tau(\frakp)} T)(1-\beta_{\tau(\frakp)}T)  
\end{aligned}
\end{equation}
we may choose $\frakp$ so that $\alpha_\frakp,\alpha_{\tau(\frakp)}/p$ are $p$-adic units.  We expect that such primes will be abundant, though that seems difficult to prove.  Then $L_p(f,T,\Asai)$ has Hodge--Tate weights (i.e., reciprocal roots with valuations) $(0,3) \otimes (1,2)=(1,2,4,5)$ (adding pairwise) so the Tate twist $L_p(f,T/p,\Asai)$ has Hodge--Tate weights $(0,1,3,4)$ and coefficients with valuations $0,0,1,3,4,8$, matching that of the hypergeometric motive.  

%  are $(0,4)$ and $(1,3)$, so the $p$-adic Hodge--Tate weights will be $(0,1,3,4)$ (adding pairwise).  We say that $f$ is \defi{ordinary} at $p$ if the reciprocal roots of $L_p(f,T,\Asai)$ have normalized valuations precisely $0,1,3,4$; we expect that ordinary primes will be abundant.  

% Let $N$ be the conductor of $M$, and $p \nmid N$ a prime.
% We recall that the Hodge numbers of $M$ are $(0,4)$ and $(1,3)$. So the $p$-adic Hodge-Tate weights of $M$ 
% are $0, 1, 3, 4$. We say that $M$ is {\it ordinary} at $p$ if we can write the Euler factor at $p$ as
% $$Q_p'(T) = (1 - \delta_0 T)(1 - \delta_1 T)(1 - \delta_2 T)(1 - \delta_3 T),$$ such that
% $v_p(\delta_0) = 0$, $v_p(\delta_1) = 1$, $v_p(\delta_2) = 3$, $v_p(\delta_3) = 4$,
% and $\delta_0, \delta_1/p, \delta_2/p^3, \delta_3/p^4$ are distinct. So the eigenvalues 
% $\delta_i$, $i = 0, 1, 2, 3$ are uniquely determined by their valuations. 

So we factor $Q_p(T)$ over the $p$-adic numbers, identifying ordinary $p$ when the roots $\delta_0,\delta_1,\delta_3,\delta_4$ have corresponding valuations $0,1,3,4$.  Then we have the equations 
\begin{equation} 
\begin{aligned}
p\delta_0 &= \alpha_\frakp \alpha_{\tau(\frakp)} \\
p\delta_1 &= \alpha_\frakp \beta_{\tau(\frakp)}
\end{aligned}
\end{equation}
and two similar equations for $\delta_3,\delta_4$.  Therefore
\begin{equation} 
p^2\delta_0\delta_1 = \alpha_\frakp^2 \alpha_{\tau(\frakp)}\beta_{\tau(\frakp)} = \alpha_\frakp^2 p^3 
\end{equation}
so 
\begin{equation} 
\alpha_\frakp = \pm \sqrt{\frac{\delta_0\delta_1}{p}};
\end{equation}
and this determines the Hecke eigenvalue
\begin{equation} \label{eqn:valueofap}
a_\frakp = \alpha_\frakp + \beta_\frakp = \alpha_\frakp + p^3/\alpha_\frakp
\end{equation}
up to sign.  

We then go hunting in \textsc{Magma} by slowly increasing the level and looking for newforms whose Hecke eigenvalues match the value $a_\frakp$ in \eqref{eqn:valueofap} up to sign.  With a candidate in hand, we then compute all good $L$-factors using \eqref{eqn:Asai-lf} to identify a precise match.  The bottleneck in this approach is the computation of systems of Hecke eigenvalues for Hilbert modular forms.

\section{Conclusion}
\label{sec7}

The $1/\pi$ story brings many more puzzles into investigation, as formulas discussed in this note do not exhaust the full set of mysteries. Some of them are associated with the special $_4F_3$ evaluations of $1/\pi$, like the intermediate one in the trio
\begin{align}
\sum_{n=0}^\infty\frac{(\frac12)_n(\frac14)_n(\frac34)_n}{n!^3}
(40n+3)\frac1{7^{4n}}&=\frac{49\sqrt3}{9\pi},
\label{eq:7^4-i}
\\
\sum_{n=0}^\infty\frac{(\frac18)_n(\frac38)_n(\frac58)_n(\frac78)_n}{n!^3(\frac32)_n}
(1920n^2+1072n+55)\frac1{7^{4n}}&=\frac{196\sqrt7}{3\pi},
\label{eq:7^4-ii}
\\
\sum_{n=0}^\infty\frac{(\frac12)_n(\frac18)_n(\frac38)_n(\frac58)_n(\frac78)_n}{n!^5}
(1920n^2+304n+15)\frac1{7^{4n}}&\overset?=\frac{56\sqrt7}{\pi^2}.
\label{eq:7^4-iii}
\end{align}
Here the first equation is from Ramanujan's list \cite[eq.~(42)]{Ra14}, the second one is recently established by Guillera \cite[eq.~(1.6)]{Gu17}, while the third one corresponds to Entry \#15 in Table~\ref{tab1} and is given in~\cite[eq.~(2-5)]{Gu03}.
There is also one formula for $1/\pi^3$, due to B.~Gourevich (2002),
\begin{align}
\sum_{n=0}^\infty\frac{(\frac12)_n^7}{n!^7}
(168n^3+76n^2+14n+1)\frac1{2^{6n}}&\overset?=\frac{32}{\pi^3},
\label{rama5}
\\ \intertext{which shares similarities with Ramanujan's \cite[eq.~(29)]{Ra14}}
\sum_{n=0}^\infty\frac{(\frac12)_n^3}{n!^3}(42n+5)\frac1{2^{6n}}
&=\frac{16}\pi
\label{rama2c}
\end{align}
(observe that $168=42\times4$). And the pattern extends even further with the support of the experimental findings
\begin{align}
\sum_{n=0}^\infty\frac{(\frac12)_n^7(\frac14)_n(\frac34)_n}{n!^9}
(43680n^4+20632n^3+4340n^2+466n+21)\frac1{2^{12n}}
&\overset?=\frac{2048}{\pi^4},
\label{cullen}
\\ \intertext{due to J.~Cullen (December 2010), and}
\sum_{n=0}^{\infty}\frac{(\frac12)_n^5(\frac13)_n(\frac23)_n(\frac14)_n(\frac34)_n}{n!^9}(4528n^4+3180n^3+972n^2+147n+9)\left(-\frac{27}{256}\right)^n
&\overset?=\frac{768}{\pi^4},
\label{rama2d}
\end{align}
due to Yue Zhao~\cite{Zh17} (September 2017).
On the top of these examples there are `divergent' hypergeometric formulas for $1/\pi^3$ and $1/\pi^4$ coming from
`reversing' Zhao's experimental formulas for $\pi^4$ and $\zeta(5)$ in \cite{Zh17}, and corresponding to the hypergeometric data
$$
{}_7F_6\biggl(\begin{matrix}
\tfrac12, \, \tfrac12, \, \tfrac12, \, \tfrac12, \, \tfrac12, \, \tfrac13, \, \tfrac23 \\
1, \, \dots, \, 1 \end{matrix} \biggm| \frac{3^3}{2^2} \biggr)
\quad\text{and}\quad
{}_9F_8\biggl(\begin{matrix}
\tfrac12, \, \tfrac12, \, \tfrac12, \, \tfrac12, \, \tfrac12, \, \tfrac15, \, \tfrac25, \, \tfrac35, \, \tfrac45 \\
1, \, \dots, \, 1 \end{matrix} \biggm| -\frac{5^5}{2^{10}} \biggr),
$$
respectively.  We hope to address the arithmetic-geometric origins of the underlying motives in the near future.


\begin{thebibliography}{99}

\bibitem{AG12}
\textsc{Almkvist, G.}, \textsc{Guillera, J.},
Ramanujan-like series for $1/\pi^2$ and string theory,
\emph{Experiment. Math.} \textbf{21} (2012), no.~3, 223--234.

\bibitem{Asa77}
\textsc{Asai, T.},
On certain Dirichlet series associated with Hilbert modular forms and Rankin's method,
\emph{Math. Ann.} \textbf{226} (1977), 81--94.

\bibitem{BCM15}
\textsc{Beukers, F.}, \textsc{Cohen, H.}, \textsc{Mellit, A.},
Finite hypergeometric functions, \emph{Pure Appl. Math. Q.} \textbf{11}:4 (2015), 559--589.

\bibitem{Magma}  
\textsc{Bosma, W.}, \textsc{Cannon, J.}, \textsc{Playoust, C.}, 
The Magma algebra system.\ I.\ The user language, \emph{J.\ Symbolic Comput.} \textbf{24}:3--4 (1997), 235--265.

% \bibitem{BPPTVY18}
% \textsc{Brumer, A.}, \textsc{Pacetti, A.}, \textsc{Poor, C.}, \textsc{Tornaria, G.}, \textsc{Voight, J.}, \textsc{Yuen, D.Y.},
% On the paramodularity of typical abelian surfaces,
% \emph{Algebra Number Theory} (to appear);
% \emph{Preprint} \href{http://arxiv.org/abs/1805.10873}{\texttt{arXiv:\,1805.10873 [math.NT]}} (2018), 46~pp.

\bibitem{COGP91}
\textsc{Candelas, P.}, \textsc{de la Ossa, X.}, \textsc{Green, P.S.}, \textsc{Parkes, L.},
A pair of Calabi--Yau manifolds as an exactly soluble superconformal theory,
\emph{Nuclear Phys. B} \textbf{359} (1991), no. 1, 21--74.

\bibitem{CC12}
\textsc{Chan, H.H.}, \textsc{Cooper, S.},
Rational analogues of Ramanujan's series for $1/\pi$,
\emph{Math. Proc. Cambridge Philos. Soc.} \textbf{153} (2012), no. 2, 361--383.

\bibitem{clemens} 
\textsc{Clemens, C.~H.}, 
\emph{A scrapbook of complex curve theory}, 2nd ed., Grad.\ Studies in Math., vol.\ 55, Amer.\ Math.\ Soc., Providence, RI (2003).

\bibitem{Coh15}
\textsc{Cohen, H.},
Computing $L$-functions: a survey,
\emph{J. Th\'eor. Nombres Bordeaux} \textbf{27} (2015), no. 3, 699--726.

\bibitem{DV13}
\textsc{Demb\'el\'e, L.}, \textsc{Voight, J.},
Explicit methods for Hilbert modular forms,
\emph{Elliptic curves, Hilbert modular forms and Galois deformations},
Adv. Courses Math. CRM Barcelona, Birkh\"auser/Springer, Basel (2013), 135--198.

\bibitem{codeonline}
\textsc{Demb\'el\'e, L.}, \textsc{Voight, J.},
Code for Asai recognition (2019), available at
\url{http://www.math.dartmouth.edu/~jvoight/HGM_Asai.m}.

\bibitem{DKSSVW18a}
\textsc{Doran, C.F.}, \textsc{Kelly, T.L.}, \textsc{Salerno, A.}, \textsc{Sperber, S.}, \textsc{Voight, J.}, \textsc{Whitcher, U.},
Zeta functions of alternate mirror Calabi--Yau families,
\emph{Israel J. Math.} \textbf{228} (2018), no. 2, 665--705.
%\emph{Preprint} \href{http://arxiv.org/abs/1612.09249}{\texttt{arXiv:\,1612.09249 [math.NT]}} (2018), 29~pp.

\bibitem{DKSSVW18b}
\textsc{Doran, C.F.}, \textsc{Kelly, T.L.}, \textsc{Salerno, A.}, \textsc{Sperber, S.}, \textsc{Voight, J.}, \textsc{Whitcher, U.},
Hypergeometric decomposition of symmetric K3 quartic pencils,
Res.~Math.~Sci.\ (to appear).

\bibitem{padic} 
\textsc{Dwork, B.},
{$p$}-adic cycles,
\emph{Inst. Hautes \'Etudes Sci. Publ. Math.} \textbf{37} (1969), 27--115.

\bibitem{ES}
\textsc{Elkies, N.D.}, \textsc{Sch\"utt, M.},
\emph{K3 families of high Picard rank}, \url{http://www2.iag.uni-hannover.de/~schuett/K3-fam.pdf} (2008).
 
\bibitem{Freitag}
\textsc{Freitag, E.}, \emph{Hilbert modular forms}, Springer-Verlag, Berlin (1990).

%\bibitem{FM16}
%\textsc{Fuselier, J.G.},  \textsc{McCarthy, D.},
%Hypergeometric type identities in the $p$-adic setting and modular forms,
%\emph{Proc. Amer. Math. Soc.} \textbf{144} (2016), no.~4, 1493--1508.
	
\bibitem{geer} 
\textsc{van der Geer, G.}, \emph{Hilbert modular surfaces}, Springer-Verlag, Berlin (1988).

\bibitem{Gr87}
\textsc{Greene, J.},
Hypergeometric functions over finite fields,
\emph{Trans. Amer. Math. Soc.} \textbf{301} (1987), 77--101.

%\bibitem{GK79}
%\textsc{Gross, B.}, \textsc{Koblitz, N.},
%Gauss sums and the $p$-adic $\Gamma$-function,
%\emph{Ann. Math.} \textbf{109} (1979), no. 3, 569--581.

\bibitem{Gu02}
\textsc{Guillera, J.},
Some binomial series obtained by the WZ-method,
\emph{Adv. Appl. Math.} \textbf{29} (2002), no.~4, 599--603. 
% e-print http://arxiv.org/abs/math/0503345.

\bibitem{Gu03}
\textsc{Guillera, J.},
\href{https://projecteuclid.org/euclid.em/1087568026}
{About a new kind of Ramanujan-type series},
\emph{Experiment. Math.} \textbf{12} (2003), no.~4, 507--510.

\bibitem{Gu06}
\textsc{Guillera, J.},
Generators of some Ramanujan formulas,
\emph{Ramanujan J.} \textbf{11} (2006), no.~1, 41-48.

\bibitem{Gu11}
\textsc{Guillera, J.},
A new Ramanujan-like series for $1/\pi^2$,
\emph{Ramanujan J.} \textbf{26} (2011), no.~3, 369--374.

\bibitem{Gu17}
\textsc{Guillera, J.},
More Ramanujan--Orr formulas for $1/\pi$,
New Zealand J. Math. \textbf{47} (2017), 151--160.

%\bibitem{Gu18}
%\textsc{Guillera, J.},
%WZ pairs and $q$-analogues of Ramanujan's series for $1/\pi$ (with an appendix by W.~Zudilin),
%\emph{J. Diff. Equat. Appl}. \textbf{24} (2018), no. 12, 1871--1879.

\bibitem{Gu19}
\textsc{Guillera, J.},
Bilateral sums related to Ramanujan-like series,
\emph{Preprint} \href{https://arxiv.org/abs/1610.04839v2}{\texttt{arXiv:\,1610.04839v2 [math.NT]}} (2019), 13~pp.

\bibitem{Gu20}
\textsc{Guillera, J.},
Bilateral Ramanujan-like series for $1/\pi^k$ and their congruences,
\emph{Preprint} \href{https://arxiv.org/abs/1908.05123}{\texttt{arXiv:\,1908.05123 [math.NT]}} (2019), 18~pp.

\bibitem{GZ12}
\textsc{Guillera, J.}, \textsc{Zudilin, W.},
``Divergent'' Ramanujan-type supercongruences,
\emph{Proc. Amer. Math. Soc.} \textbf{140} (2012), no.~3, 765--777.

\bibitem{GZ13}
\textsc{Guillera, J.}, \textsc{Zudilin, W.},
Ramanujan-type formulae for $1/\pi$: the art of translation,
in ``The Legacy of Srinivasa Ramanujan'', B.C. Berndt \& D. Prasad (eds.),
\emph{Ramanujan Math. Soc. Lecture Notes Ser.} \textbf{20} (2013), 181--195.
%{\tt arXiv:1302.0548v2 [math.NT]}

%\bibitem{He27}
%\textsc{Hecke, E.},
%Theorie der Eisensteinschen Reihen und ihre Anwebdung auf Funktionnentheorie und Arithmetik,
%\emph{Abh. Math. Sem. Hamburg} \textbf{5} (1927), 199--224.

%\bibitem{Hi4}
%\textsc{Hida, H.},
%\emph{Elementary theory of $L$-functions and Eisenstein series},
%London Math. Soc. Student Texts \textbf{26}, Cambridge University Press, Cambridge (1993).

\bibitem{Igusa} 
\textsc{Igusa, J.}, Class number of a definite quaternion with prime discriminant,
\emph{Proc. Nat. Acad. Sci. USA} \textbf{44} (1958), 312--314.

%\bibitem{Ike01}
%\textsc{Ikeda, T.},
%On the lifting of elliptic cusp forms to Siegel cusp forms of degree $2n$,
%\emph{Ann. of Math.} (2) \textbf{154} (2001), 641--681.

\bibitem{Ka76}
\textsc{Katz, N.M.},
$p$-Adic interpolation of real analytic Eisenstein series,
\emph{Ann. of Math.} \textbf{104} (1976), 459--571.

\bibitem{Ka90}
\textsc{Katz, N.M.},
\emph{Exponential sums and differential equations},
Annals of Math. Studies \textbf{124}, Princeton (1990).

\bibitem{Kr02a}
\textsc{Krishnamurthy, M.}, 
Determination of cusp forms on $GL(2)$ by coefficients restricted to quadratic subfields (with an appendix by Dipendra Prasad and Dinakar Ramakrishnan),
\emph{J. Number Theory} \textbf{132} (2012), no. 6, 1359--1384. 

\bibitem{Kr03}
\textsc{Krishnamurthy, M.},
The Asai transfer to $\GL_4$ via the Langlands-Shahidi method, \emph{Int. Math. Res. Not.} (2003), no.~41, 2221--2254.

%\bibitem{Kob80}
%\textsc{Koblitz, N.},
%\emph{$p$-Adic analysis: a short course on recent work},
%London Math. Soc. Lect. Notes Series \textbf{46}, Cambridge University Press, Cambridge (1980).

%\bibitem{Kub}
%\textsc{Kubota, T.},
%\emph{Elementary theory of Eisenstein series},
%Kodansha Ltd. \& John Wiley and Sons, Halsted Press (1973).

%\bibitem{LangMF}
%\textsc{Lang, S.},
%\emph{Introduction to modular forms}, With appendices by D.~Zagier and W.~Feit,
%Springer-Verlag, Berlin (1995).

% \bibitem{LTYZ17}
% \textsc{Long, L.}, \textsc{Tu, F.-T.}, \textsc{Yui, N.}, and \textsc{Zudilin, W.},
% {\em Supercongruences for rigid hypergeometric Calabi--Yau threefolds},
% \emph{Preprint} \href{http://arxiv.org/abs/1705.01663}{\texttt{arXiv:\,1705.01663 [math.NT]}} (2017), 33~pp.

%\bibitem{Miy89}
%\textsc{Miyake, T.},
%\emph{Modular forms},
%Berlin etc., Springer-Verlag (1989).

%\bibitem{Pa81}
%\textsc{Panchishkin,  A.A.},
%{\em Complex valued measures attached to Euler products},  Trudy Sem. Petrovskogo 7 (1981) 239--244 (in Russian)

%\bibitem{Pa82}
%\textsc{Pan\v ci\v skin, A.A.},
%Le prolongement $p$-adique analytique de fonctions $L$ de Rankin I, II.  C. R. Acad. Sci. Paris  294 (1982) 51-53, 227-230.

%\bibitem{Pa91}
%\textsc{Panchishkin,  A.A.},
%{\em Non-Archimedean $L$-functions of Siegel and Hilbert modular forms}, Lecture Notes in Math., {\bf 1471}, Springer-Verlag, 1991, 166p.

\bibitem{Pr92}
\textsc{Prasad, D.},
Invariant forms for representations of $\GL_2$ over a local field,
\emph{Amer. J. Math}. \textbf{114} (1992), no. 6, 1317--1363.

\bibitem{Ra02}
\textsc{Ramakrishnan, D.},
Modularity of solvable Artin representations of $\GO(4)$-type,
\emph{Int. Math. Res. Not.} (2002), no.~1, 1--54.

\bibitem{Ra14}
\textsc{Ramanujan, S.},
Modular equations and approximations to~$\pi$,
\emph{Quart. J. Math. Oxford Ser.}~(2) \textbf{45} (1914), 350--372;
Reprinted in \emph{Collected papers of Srinivasa Ramanujan},
G.\,H.~Hardy, P.\,V.~Sechu Aiyar, and B.\,M.~Wilson (eds.),
Cambridge University Press, Cambridge (1927) \&
Chelsea Publ., New York (1962), 23--39.

\bibitem{RV17}
\textsc{Roberts, D.P.}, \textsc{Rodriguez-Villegas, F.},
%\href{https://doi.org/10.1007/978-3-030-04161-8_33}{Hypergeometric supercongruences},
Hypergeometric supercongruences,
in \emph{2017 MATRIX Annals}, MATRIX Book Ser. \textbf{2}, Springer (2019), 435--439.
%\emph{Preprint} \href{http://arxiv.org/abs/1803.10834}{\texttt{arXiv:\,1803.10834 [math.NT]}} (2018), 5~pp.

\bibitem{RVW}
\textsc{Roberts, D.}, \textsc{Rodriguez-Villegas, F.}, \textsc{Watkins, M.}, 
\emph{Hypergeometric motives}, in preparation.

\bibitem{Tay89}
\textsc{Taylor, R.},
On Galois representations associated to Hilbert modular forms,
\emph{Invent. Math.} \textbf{98} (1989), no. 2, 265--280.

%\bibitem{Vi17}
%\textsc{Rodriguez-Villegas, F.},
%\emph{Hypergeometric motives}, Lecture notes (2017).

%\bibitem{We49}
%\textsc{Weil, A.},
%Numbers of solutions of equations in finite fields,
%\emph{Bull. Amer. Math. Soc.} \textbf{55} (1949), 497.

%\bibitem{We52}
%\textsc{Weil, A.},
%Jacobi sums as ``Gr\"ossencharaktere'',
%\emph{Trans. Amer. Math. Soc.} \textbf{73} (1952), no. 3, 487--495.

\bibitem{Zh17}
\textsc{Zhao, Y.},
A mysterious connection between Ramanujan-type formulas for $1/\pi^k$ and hypergeometric motives,
\url{http://mathoverflow.net/questions/281009/} (September 2017).

%\bibitem{Zu08}
%\textsc{Zudilin, W.},
%Ramanujan-type formulae for $1/\pi$: A second wind?,
%\emph{Modular forms and string duality} (Banff, June 2006),
%N.~Yui, H.~Verrill, and C.\,F.~Doran (eds.),
%Fields Inst. Commun. Ser. \textbf{54},
%Amer. Math. Soc. \& Fields Inst., Providence, RI (2008), 179--188.

%\bibitem{Zu09}
%\textsc{Zudilin, W.},
%Ramanujan-type supercongruences,
%\emph{J. Number Theory} \textbf{129} (2009), no. 8, 1848--1857.

\bibitem{Zu18}
\textsc{Zudilin, W.},
A hypergeometric version of the modularity of rigid Calabi--Yau manifolds,
\emph{SIGMA} \textbf{14} (2018), 086, 16~pages.

\end{thebibliography}
\end{document}